\newtheorem{theo}{Theorem}[section]
\newtheorem{prop}{Proposition}[section]
\newtheorem{defi}{Definition}[section]
\newtheorem{lemma}{Lemma}[section]
\numberwithin{equation}{section} 
\definecolor{pavone}{rgb}{0.00,0.00,0.63}
\definecolor{malva}{rgb}{0.10,0.50,0.50}
\definecolor{rosso}{rgb}{1.,0.,0.}
\def\t{\textrm} 
\def\beq{\begin{eqnarray}}
\def\eeq{\end{eqnarray}}
\def\baa{\begin{array}}
\def\eaa{\end{array}}
\newcommand{\bdef}{\begin{definition}}
\newcommand{\be}{\begin{equation}}
\newcommand{\ee}{\end{equation}}
\newcommand{\bt}{\begin{theo}}
\newcommand{\et}{\end{theo}}
\newcommand{\bl}{\begin{lemma}}
\newcommand{\el}{\end{lemma}}
\newcommand{\bp}{\begin{prop}}
\newcommand{\epr}{\end{prop}}
\newcommand{\sgn}{\text{sgn}}
\def\uei{u^{\epsilon}_i}
\def\ue{u^{\epsilon}}
\def\en{\epsilon_n}
\def\ueio{u^{\epsilon}_{0i}}
\def\A{\mathcal A}
\def\C {{\mathbb C}}
\def\dsp{\displaystyle}
\def\ep{\epsilon}
\def\ov{\overline}
\let\dsp=\displaystyle
\def\R{{\mathbb R}}
\def\N{{\mathbb N}}
\def\li{\lambda_i}
\def\v{v_i}
\def\u{u_i}
\def\M{{\mathcal{M}}}
\def\Ii{I_i}
\def\iIi{\int_{\Ii}}
\def\suM{\sum_{i\in \M}}
\def\suMj{\sum_{j\in \M}}
\def\w{ \mathcal W^N}
\begin{document}
\title[VANISHING VISCOSITY APPROXIMATION ON STARSHAPED NETWORKS]
{VANISHING VISCOSITY APPROXIMATION FOR LINEAR TRANSPORT EQUATIONS ON FINITE STARSHAPED NETWORKS}
\author[Francesca R. Guarguaglini]{Francesca R. Guarguaglini$^*$}
%
\thanks{ \noindent
  $^*$ Dipartimento di Ingegneria e Scienze dell'Informazione e Matematica, \\
 \indent Universit\`a degli Studi di L'Aquila, Via Vetoio, I-67100 Coppito (L'Aquila), Italy
\\ \indent francescaromana.guarguaglini@univaq.it}
\author [Roberto Natalini]{ Roberto Natalini$^{**}$}
\thanks{ 
$^{**}$ Istituto per le Applicazioni del Calcolo "M.Picone", 
 Consiglio Nazionale delle Ricerche, Via dei Taurini 19, I-00185 Roma,
Italy\\
 \indent roberto.natalini@cnr.it}
%
\subjclass{Primary: 35R02; Secondary: 35M33, 35L50, 35B40, 35Q92}
 \keywords{viscosity approximation, linear transport equations, transmission conditions on networks}  
\date{}
%
\begin{abstract} 
In this paper we study  linear parabolic equations 
 on a finite oriented starshaped network; the equations are    coupled by transmission conditions
set at the inner node, which do not impose continuity  on the unknown.
We consider this problem 
as a parabolic approximation of a set  of first order linear transport equations on the network 
and we prove that, when the diffusion coefficient vanishes, the family of solutions converges to the unique solution to the  first order equations and satisfies   suitable transmission conditions at the inner node, which are  determined by the
   parameters appearing  in the parabolic transmission conditions.
   \end{abstract}
\maketitle
\begin{section} 
{\bf Introduction}
In this paper we study a vanishing viscosity approximation for linear first order transport equations on a finite starshaped network
composed by $m$ arcs $I_i$, $i\in\M$, an inner node $N$ and $m$ outer nodes
$e_i$, $i\in\M$; each arc is a bounded interval $I_i=(0,L_i)$.
We set
$$\mathcal I=\{i\in\M: I_i \t{ is an incoming arc in the node } N\}\ ,$$
$$\mathcal O=\{i\in\M: I_i \t{ is an outgoing arc from the node } N\}\ ;$$
$\mathcal I$ and $\mathcal O$ are non empty sets.

On each arc $I_i$, $i\in\M$, we consider a   linear first order transport equation
\be\label {ipe}{\u}_t= -\li {\u}_x \qquad x\in I_i\  ,\ \ t>0\  \ ,\ee
where $\li>0$;
each equation (\ref{ipe}) is  complemented by the initial condition
\be
	\label{ici}
	u_i(x,0)=u_{0i}(x)\in  BV(I_i) \ ,\quad   
\ee
and,  when $i\in\mathcal I$, by the boundary condition  at the outer node $e_i$,
\be\label{bci2}\u(0,t)=B_i 
\ ,
\ \quad  t>0\  ,\quad \ i\in \mathcal I\ ;\ee
moreover   the equations (\ref{ipe}) for $i\in\M$, are coupled by the  following condition   at the inner node $N$,
\be\label{consflux}\dsp \sum_{i\in \mathcal I} \li u_i (L_i,t)=  \sum_{i\in \mathcal O} \li u_i (0, t) \ ,\ee
which establishes the conservation of the flux.

First order nonlinear conservation laws on networks with transmission conditions preserving the flux at the inner nodes
have been largely studied by several authors, see for instance  \cite{GP} and references therein for traffic models.

Here we focus on cases when equality (\ref{consflux}) is achieved  imposing at the node  this kind of conditions
\be\label{tci}
\li\u(0,t)= \sum_{j\in \mathcal I}  \gamma_{ij}\lambda_j u_j(L_j,t)\ , \ \quad  \forall i\in\mathcal O\  ,\ee
where 
\be\label{coga} 
 \gamma_{ij}\geq 0  \ \forall i\in\mathcal O\ \t { and } \forall j\in\mathcal I\ , \ \ \ \ \dsp \sum_{i\in\mathcal O} \gamma_{ij}=1\ \forall j\in\mathcal I\ .\ee

 We notice that  conditions (\ref{tci}) form  actually a set of  transmission conditions if the parameters $\gamma_{ij}$ satisfy
\be\label{coga2}  \t{ for all }i\in\mathcal O \t { there exists at least one } j\in\mathcal I \t { such that } \gamma_{ij}>0 \ .\ee

Set
$$L^{p}(\A)=\{ f=(f_1,f_2,...,f_m) : f _i\in L^p(I_i)\}\ ,
$$ 
$$W^{m,p}(\A)=\{ f=(f_1,f_2,...,f_m) : f_i\in W^{m,p}(I_i)\}\ ,$$
$$BV(\A)=\{ f=(f_1,f_2,...,f_m) : f_i\in BV(I_i)\}\ ,$$
$$BV(\A\times(0,T))=\{ f=(f_1,f_2,...,f_m) : f_i\in BV(I_i\times (0,T))\}	 ,$$
and $\Vert f\Vert=\dsp  \suM\Vert f_i\Vert$.

\begin{defi}\label{sol} For every $u_0\in BV(\A)$, a function $u\in BV(\A\times (0,T))$ is a solution of (\ref{ipe})-(\ref{bci2}), (\ref{tci})
 if,  for  all
$\phi_i\in  \mathcal C^1_0([0,L_i) \times  [0,T))$ ,
$$
	\int_0^T \iIi \left[\u(\phi_{i_t} +\li \phi_{i_x}) \right](x,t) dx dt + \iIi  {\u}_0(x) \phi_i(x,0) dx = -
	\li  B_i\int_0^T  \phi_i(0,t) dt\  ,\ \   i\in\mathcal I ,
$$
$$ \begin{array}{ll}
\dsp	\int_0^T \iIi  \left[\u(\phi_{i_t} +\li \phi_{i_x}) \right](x,t) dxdt &+\dsp  \iIi   {\u}_0(x) \phi_i(x,0) dx =
\\
 & -
\dsp	 \int_0^T  \phi_i(0,t)  \sum_{j\in \mathcal I}  \gamma_{ij}\lambda_j u_j(L_j,t) dt\  ,\quad i\in\mathcal O\  .
\end{array}$$
\end{defi}

For every $u_0\in BV(\A)$, problem 
(\ref{ipe})-(\ref{bci2}),(\ref{tci}) has a unique solution in the sense of the above definition; this fact is easily proved by
taking into accout that each component $u_i$ 
has null weak derivative in the direction
${\bf z}=(s,\lambda_i s)$ and then using  the Stampacchia's Lemma \cite{Sta} and  the Green's formula for $BV$ functions \cite{Vol}  (see also   \cite{NT}, Remark 3 and Theorem 3).

\medskip
In this paper we deal with a vanishing viscosity approximation  for  (\ref{ipe})-(\ref{bci2}),(\ref{tci}),
by considering the following parabolic problem, where $\ep$ is a variable parameter to be sent  to zero, 

\begin{equation}
	\label {para}\left\{ \begin{array}{ll}
	{\uei}_t= -\li {\uei}_x + \ep {\uei}_{xx} \qquad x\in I_i\ ,\ \ t>0\ ,\  i\in\M\ ,\\ \\
	\uei(x,0)=\ueio(x) \in W^{2,1}(I_i)\ , \qquad x \in I_i\ ,\quad i\in\M\ ,\\ \\
		\beta_{i}  \left( -\li \uei(N, t) +\ep {\uei}_x(N, t)\right)
	= \dsp \sum_{j\in\M} K_{ij} (\ue_j(N,t)-\uei(N,t))\ , \quad t>0\ , \ i\in\M, \\ \\
	\uei(e_i,t)=\ueio(e_i)\ ,\qquad  i\in\M
		\qquad\ \qquad\qquad t>0\ ,
	\end{array} \right.
\end{equation}
where $\u^\ep (N,t)$ denotes $\u^\ep (0,t)$ if $i\in\mathcal O$ and denotes  $\u^\ep (L_i,t)$ if $i\in\mathcal I$, and 
$\u^\ep (e_i,t)$, $ {u_0}_i^\ep (e_i)$ denote $u^\ep _i(0,t)$, $ {u_0}_i^\ep (0)$
 if $i\in\mathcal I$,  and denote $\u^\ep (L_i,t)$,  ${u_0}_i^\ep (L_i)$
 if $i\in\mathcal O$. Besides, we assume 
 \begin{equation} \label{17}
      	K_{ij}\geq 0\ ,\ K_{ij}=K_{ji}\  \forall i,j\in\M\ ,\ 
   	 \qquad  \beta_{i}=\left\{\begin{array}{ll} \ \  1\qquad i\in\mathcal I\\
   	-1\qquad i\in \mathcal O
   	 \end{array} \ .\right.  
 \end{equation}
   Finally, the values ${u_0}_i^\ep$  satisfy the transmission conditions in (\ref{para}) for all $i\in\mathcal \M$,
\be
	\label{uop} 
	\beta_{i}  \left( -\li \ueio(N) +\ep {\ueio}_x(N)\right)
	= \dsp \sum_{j\in\M} K_{ij} ({\ue_0}_j(N)-\ueio(N))\ 
\ee
and $u_0^\ep$ approximates $u_0$ for $\ep\to0$ in suitable way (see Section 3).

Transmission conditions in (\ref{para}) imply that the sum of the  fluxes  incoming in the inner node $N$ is equal to 
the one of the outgoing fluxes, thanks to the assumptions on the coefficients $K_{ij}$; however these conditions  impose no continuity condition on the solution at node, as assumed for instance in \cite{DM, CG, DZ, VZ}, 
so that they seem to be more appropriate when dealing with mathematical models for movements of individuals, as in traffic flows 
or in  biological phenomena involving bacteria or other microorganisms movements, where discontinuities  at the nodes  for the density functions 
are expected.
This kind of conditions were introduced by Kedem-Katchalsky in \cite{KK} as permeability conditions in the description of passive transport through a biological membrane, see also \cite{QVZ, CZ, CN} for various mathematical and related formulations; more recently they were proposed as transmission conditions for hyperbolic-parabolic and hyperbolic-elliptic 
systems describing movements of microorganism on  networks, driven by chemotaxis 
\cite{GN,G1,G2,GPS}.

In the next section we are going to prove that problem (\ref{para})-(\ref{uop}) has a unique smooth solution
$u^\ep$ in the following sense.

\begin{defi} For every $u^\ep_0\in W^{2,1}(\A)$ satisfying (\ref{uop}), a solution of (\ref{para}) is a function
$u^\ep   \in \mathcal C([0,+\infty);W^{2,1}(\A))\cap  \mathcal C^1([0,+\infty);L^1(\A))$ which solves the equation in (\ref{para}) in strong sense
and verifies the initial, boundary and transmission conditions in (\ref{para}).
\end{defi}

Moreover  we shall prove some  estimates  uniform in $\ep$ for $u^\ep$,
assuming the following condition on the coefficients $K_{ij}$ in (\ref{para}),
\be\label{anz}
\t{  for all } i\in\mathcal I\ ,  K_{ij}> 0 \t{ for some } j\in\mathcal O \t{ (at least one } j).
\ee

The uniform estimates  allow to use compacteness techniques  and prove   the convergence of each
sequence $\{u^{\ep_n}\}_{n\in\N}$, $\ep_n\to 0$, to the unique solution to problem (\ref{ipe})
which satisfies (\ref{ici}), (\ref{bci2}) and (\ref{tci}) with  coeffcients
$\gamma_{ij}$  determined by $K_{ij}$ and $\li$ ($i,j\in\M$). The result is achieved  provided  that
$\{u^{\ep_n}_0\}_{n\in \N}$ is a sequence approximating in suitable
way the initial data $u_0$ and assuming the boundary data $B_i$ on the incoming arcs; 
we show the actual possibility to obtain such approximation.

  The vanishing viscosity for  conservation laws on networks has  been approached in several papers. In particolar we refer to
  \cite{CG}, where the authors approximate a conservations law on a starshaped network with unlimited  arcs 
  by means of a parabolic problem, enforcing a continuity condition at the inner node beside to conservation of the parabolic flux; 
  also,  we refer to 
  \cite{CD} where a
  more general set of transmission conditions on the parabolic fluxes is considered, inspired  by \cite{GN}, which preserve the 
  parabolic flux but allows 
  discontinuities for the unknown; 
  both the papers consider a nonlinear conservation law 
  and the results 
  concern the convergence
   of a subsequence of  solutions of the parabolic problems 
   to a solution of the conservation law 
   satisfying the conservation of the flux at the inner node; the flux function $f$ in the conservation law 
  is such that
  $f(1)=f(0)=0$ and initial data verify $0\leq {u_0}_i\leq 1$, so that a uniform  $L^\infty$ bound for solutions to the   parabolic problems holds
  thanks to comparison results, and compensated compactness arguments are used to prove strong convergence. 
     
   Here, thanks to the linearity of the problems, not only  we can study in details the solvability of the regularized problem, 
  but also we are able to give a more precise result of convergence: once  the velocities $\li$ and the coeffcients $K_{ij}$
   in the conditions at the node $N$ in (\ref{para}) have been set, we identify univokely the limit function as the  solution to 
   (\ref{ipe})-(\ref{bci2}) satisfying (\ref{tci}) with specific $\gamma_{ij}$, 
   so that the convergence result holds for all the sequence
   $u^{\ep_n}$, $\ep_n\to 0$. The structure of the transmission conditions in (\ref{para}) is crucial in proving this result. 
   
   We remark that we could not  obtain a uniform  $L^\infty$ bound for the functions $\{u^\ep\}$ by means of  comparison techiques
 since,  in general, it is not easy to find supersolutions and subsolutions to the problem (\ref{para});  as a matter of fact,   the existence of solutions in the form  $U_i(x,t)=C_i$ satisfying  transmission conditions   (\ref{para})  is connected
   to the features of the coefficients matrix of the linear system 
   $$\dsp \sum_{j\in\M} K_{ij} (C_j-C_i)-\beta_i\li C_i =0\ , \ \ i\in\M;$$ moreover, 
   although the equations  in (\ref{para}) are linear, if $u$ is a solution , the function $u+C$ no longer satisfies
   the transmission conditions, so that  we are able to compare solutions only with the null one, i.e.
     the unique solution  which is constant on the whole network  and satisfies the  transmission conditions in (\ref{para}).
     
   One may wonder if, every given set of parameters $\{\gamma_{ij}\}$ satisfying (\ref{coga}) and (\ref{coga2}) 
   admits at least a corresponding set  $\{K_{ij}\}$ verifying (\ref{17}),(\ref{anz}) so that problems (\ref{para}) approximates 
   the problem (\ref{ipe})-(\ref{tci}).
   In this direction, first we prove that under the condition
   \be\label{lc}  \t{ for each } i\in\mathcal O\t{  there exists at least one } j\in\mathcal I \t{ such that } K_{ij } >0\ ,\ee
problem (\ref{ipe})-(\ref{tci}), whose solution is the  limit function  of the sequences $u^{\ep_n}$ ($\ep_n\to 0$),
  actually verifies condition (\ref{coga2}), i.e. (\ref{coga}) are actually transmission conditions.
  However, in the general case, we do not have the proof that
to each given family of parameters  $\gamma_{ij}$ satisfying (\ref{coga}), (\ref{coga2}),
corresponds at least one  family of 
coefficients $K_{ij}$ in (\ref{para}), satisfying (\ref{17}), (\ref{anz}), which allow to achieve the approximation result,
 since this involves too heavy computations. 
 In this paper, we are able to show how choosing the   coefficients 
  $K_{ij}$  in correspondence with  the parameters $\gamma_{ij}$
  only for some classes of networks and some classes of conditions (\ref{tci}).

  The paper is organized as follows. In Section 2 we study the solvability of problem (\ref{para})-(\ref{uop}) 
 by using the theory of semigroups. In Section 3 we derive a priori estimates for the solutions to 
(\ref{para})-(\ref{uop}), (\ref{anz}), which will turn out to be uniform in $\ep$ providing the family of initial data $u^\ep_0$ approximate 
$u_0$ in a suitable way. This approximation is described in Section 4, where, finally, we prove the convergence result
for  all the sequences of solutions of  the parabolic problem to a determined solution of (\ref{ipe})-(\ref{consflux}).
In the  last section we propose, for some classes of networks and some classes of transmission conditions (\ref{tci}),
sets of parameters $K_{ij}$ which give rise to limit function satisfying (\ref{tci}) with predetermined 
 coefficients $\gamma_{ij}$ .
\end{section}

\begin{section}
{\bf Existence of solutions for the parabolic problem}

This section is devoted to the prove of existence and uniqueness of solutions to problem (\ref{para})-(\ref{uop}).
It is convenient to rewrite the transmission condtions in (\ref{para}) in a more compact form, as follows
\be
	\label{cftc}
	\beta_{i}  \left(\li \uei(N, t) -\ep {\uei}_x(N, t)\right)
	= \dsp \sum_{j\in\M} \alpha_{ij} \ue_j(N,t)\ , \qquad t>0\ ,\ i\in\M\ ,
\ee
where 
\be
	\label{aij}
		\alpha_{ij}= -K_{ij}  \leq0\ \t{  for } i\neq j \  ,\ \qquad
	\alpha_{ii}=\sum_{j\in\M, j\neq i} K_{ij}\geq 0\ ;
\ee
notice that $\alpha_{ij}=\alpha_{ji}$,  and $\dsp \sum_{j\in\M} \alpha_{ij}=\dsp \sum_{i\in\M} \alpha_{ij}= 0$; moreover,
if (\ref{anz}) holds, then $\alpha_{ii}>0$ for $i\in\mathcal I$.

	Let us consider $q(x)=\{q_i(x)\}_{i\in\M}$, $q_i\in  \mathcal C^\infty(I_i)$, such that
$$  q_i(e_i)={u^\ep_0}_i (e_i)\ ,\quad q_i(N)=  q'_i(N)=0\ ,\ \ i\in\M\ ,$$
and the  operator $A_0^\ep$ defined as follows
\be
     \label{opar} \begin{array}{ll} D(A_0^\ep)=\left \{ \begin{array}{ll} w\in W^{2,1}(\A): 
      w(e_i)=0\ \  (i\in \mathcal \M) ,\\
      \beta_{i}\left(\li  w_i(N) -\ep {w_i}_x(N)\right)=\dsp  \sum_{j\in\M} \alpha_{ij} w_j(N) \ \   (i\in\M)  
      \end{array} \right\}
       \ ,\\
      A_0^\ep w= \{\ep {w_i}_{xx} -\li {w_i}_x\}_{i\in\M} \ .\end{array}
\ee

If $u^\ep$ is a solution to problem (\ref{para})-(\ref{uop}) then $v^\ep(t)=u^\ep(t)-q $  satisfies the problem
\be
	\label{PO} \left\{\begin{array}{ll}
	v^\ep(t)\in  \mathcal C([0,+\infty);  D(A_0^\ep))\cap  \mathcal C^1([0,+\infty);  L^1(\A))\ ,\\ 
	v^\ep_t= A_0^\ep v^\ep +F\ ,\\ 
	v^\ep (x,0)= u^\ep_0(x) - q (x)  \in  D(A_0^\ep)\ , \end{array}\right.
\ee
where $F(x)=\{F_i(x)= \ep q_i'' (x)-\li q'_i(x) \}_{i\in\M}$.

We  are going to prove the existence and uniqueness of solutions to problem (\ref{PO}) in order to prove the same results for problem 
(\ref{para})-(\ref{uop}).

The first step in the proof is showing  that, thanks to conditions (\ref{aij}) on $\alpha_{ij}$, the linear unbounded operator 
$A^\ep_0$  is m-dissipative in  $L^1(\A)$, so that the existence of a unique solution 
$v^\ep\in  \mathcal C([0,+\infty);D(A_0^\ep))\cap  \mathcal C^1([0,+\infty);L^1(\A))$ for   the homogeneous problem  associated to  (\ref{PO}) will result from the theory of linear semigroups (see \cite{CH}).
We recall that,  as defined in the same reference, a linear unbounded operator $A$  in a Banach space $X$ is dissipative  if 
$\Vert v-\theta Av \Vert_X\geq \Vert v\Vert_X$ 
 for all  $v\in D(A)$   and for all $\theta>0$; moreover
 a linear unbounded  operator $A$  in a Banach space $X$ is m-dissipative if it is dissipative and there exists $\theta_0>0$, such that for all $f\in X$, there exists a solution $v$ of $v-\theta_0 Av =f$.

In some of the proofs of the paper we are going to use the following functions.
Let $\delta>0$; 
we set 
\be
	\label{gg}
    g_\delta(w) = \left\{\begin{array}{ll} \qquad0 \qquad \qquad w\in(-\infty, 0]\\ 
    w^2 (4\delta )^{-1} \qquad \ \, w\in (0,2 \delta] \\
    w-\delta  \qquad\qquad w\in (2\delta, +\infty) \,
    \end{array}\right. \ .
\ee
Notice that 
 \be
 	\label{g}
	\begin{array}{ll}
    	\dsp g_\delta\in \mathcal C^1 (\R)\  , \ g_\delta(w)  \to_{\delta\to 0} [w]^+ \t{  pointwise }\  ,
    	\   \vert g_\delta(w)\vert\leq \vert w\vert\ , 
    	\\  
   	 \dsp g'_\delta  (w)\to_{\delta\to 0} {\chi}_{[0,+\infty)}(w) \t{ pointwise }\ ,\  \vert g'_\delta  (w)\vert \leq 1\ ,
     	\\  
     	\dsp g''_\delta(w)\to_{\delta\to 0} 0  \t{ pointwise } \ ,\  g''(w)w \t { is bounded } .
     	\end{array}
 \ee
Finally, we set 
\be
	\label{ovg}\ov g_\delta(w):=  g_\delta(w) + g_\delta(-w)\in  \mathcal C^1 (\R)\ .
\ee

\bp
	The operator $A_0^\ep$ defined in (\ref{opar}) is 
	dissipative in $L^1(\A)$.
\end{prop}
\begin{proof}
We have to prove that
$$
	  \suM \Vert \v -\theta (\ep {\v}_{xx} -\li{\v}_x)\Vert_1\geq \suM \Vert \v\Vert_1\
\ \qquad	 
 \forall v\in D(A_0^\ep)\ , \quad \forall \theta>0\ .
$$
Setting $f_i:= \v-\theta (\ep {\v}_{xx} -\li{\v}_x)$, then 
$$
	\suM \Vert f_i\Vert_1 \geq \suM\iIi f_i \ov g_\delta' (\v) dx
	=\suM\iIi \left( \v \ov g' (\v) +\theta (\li {\v}_x - \ep {\v}_{xx} ) \ov g_\delta' (\v)\right)dx 
$$
$$
	= \suM \iIi  \v \ov g' (\v) dx
	+ \theta \suM\iIi \left( \ep {\v}_{x} -\li {\v}) {\v}_x \ov g''_\delta (\v)\right)dx 
$$
$$
	+ \theta \sum_{i\in \mathcal I } \left( \li \v(L_i) -\ep{\v}_x(L_i) \right)\ov g'_\delta(\v(L_i))
	-  \theta \sum_{i\in \mathcal O} \left( \li \v(0) -\ep{\v}_x(0)\right) \ov g_\delta'(\v(0))
$$
$$
	\geq \suM \iIi  \v \ov g_\delta' (\v) dx +\theta \suM\iIi \left( -\li {\v}) {\v}_x \ov g''_\delta (\v)\right)dx 
$$
$$
 	+ \theta \sum_{i\in \mathcal I} \left(\sum_{j\in\M} \alpha_{ij} v_j(N)\right) \ov g_\delta' (\v(N))
	+ \theta \sum_{i\in \mathcal O} \left(\sum_{j\in\M} \alpha_{ij} v_j(N)\right) \ov g_\delta' (\v(N))\ 
$$
$$
	=\suM \iIi  \v \ov g_\delta' (\v) dx
	+ \theta \suM\iIi \left( -\li {\v}) {v_i}_x \ov g''_\delta (\v)\right)dx 
  	+\theta  \sum_{i,j \in \M}  \alpha_{ij} v_j(N) \ov g_\delta'(\v(N))\ .
$$
Since   $v_i, {v_i}_x\in L^1(I_i)$, 
thanks to (\ref{g}), when $\delta$ goes to zero, 
using the dominated convergence theorem, we obtain 
$$
	\suM \Vert f_i\Vert_1 \geq
	 \suM \Vert \v\Vert_1+ \theta\sum_{i,j \in \M}  \alpha_{ij} |v_j(N)| 
	 \sgn(v_j(N)) \sgn(\v(N) )
$$
 $$
 	= \suM \Vert \v\Vert_1+\theta  \sum_{j \in \M}  |v_j(N)| 
	 \sum_{i\in \M}
  	\alpha_{ij}  \sgn(v_j(N))
  	\sgn(\v(N)) \ .
$$
  The conditions  (\ref{aij}), which are valid  thanks to the assumptions on the coefficients      $K_{ij}$,  ensure that there holds
  $$
  	 \sum_{i\in \M}
  	\alpha_{ij}  \sgn(v_j(N))
  	\sgn(\v(N)) \geq 0 \ ,
$$
  so the dissipativity of $A_0^\ep$ follows.
  \end{proof}
  
  \bp 
  The operator $A_0^\ep$ defined in (\ref{opar}) is 
m-dissipative in $L^1(\A)$.
\end{prop}
\begin{proof}
 Since the operator $A_0^\ep$ is  dissipative, we have only 
 to prove that  for all $f\in L^1(\A)$ there exists a solution $v\in D(A_0^\ep)$ of the equation
  $v-A_0^\ep v=f$. Let $f=\{f_i\}_{i\in\M}$; if $f_i\in  \mathcal C^\infty_0(\Ii)$ for all $i\in\M$, the solutions 
  of the equation
  \be
  	\label{eqlin2}{\v''} -\frac \li \ep {\v'} -\frac 1 \ep \v =\frac{ f_i}{\ep} \ ,\qquad  x\in\Ii
\ee
  can be written
\be\label{intgen}
  	\dsp \v (x) = c_{i1} e^{a_{i1} x}+ c_{i2} e^{a_{i2} x} +p_i(x)
\ee
  where 
  \begin{equation}
  	   \label{radici} \ep\,{a_{i1}} =\frac \li 2 -\frac 1 2 \sqrt{\li^2 +4\ep}\  < 0
 	    \ ,\qquad
  	   \ep\, {a_{i2}} =\frac \li 2 +\frac 1 2 \sqrt{\li^2 +4\ep} \ > \li \ ,
  \end{equation}
  and $p_i(x)$ is the particular solution to (\ref{eqlin2}) 
     which satisfies the conditions
  \be   \label{intpart}
  	p_i(L)=p'_i(L)=0\ \t{ if } \  i\in \mathcal I\ ,\qquad  p_i(0)=p'_i(0)=0 \ \it{ if }\   i\in \mathcal O \ .
  	  \end{equation}
  
  We are going to prove that there exist $c_{i1} , c_{i2} \in\R$ $(i\in\M)$ such that $v=\{\v\}_{i\in\M}$ given by (\ref{intgen}) satisfies the boundary and the transmission conditions in (\ref{opar}) .
  The boundary  conditions read 
  \be\label{bcci} 
  	\begin{array}{ll}
	c_{i1}+c_{i2} +p_i(0) =0 \qquad i\in\mathcal I\ , \\ \\ 
  	c_{i1} e^{a_{i1} L_i}+c_{i2}  e^{a_{i2} L_i}+p_i(L_i) =0 \qquad i\in\mathcal O\ ,
	\end{array}
\ee
while the conditions at the inner node read
\be\label{s1}
  	\left\{  \begin{array}{ll} \li \left(c_{i1} e^{a_{i1} L_i}+ c_{i2} e^{a_{i2} L_i} \right) 
  	-\ep \left(c_{i1} a_{i1}e^{a_{i1} L_i}+ c_{i2} a_{i2} e^{a_{i2} L_i}\right) \\ \\
  	 =\dsp \sum_{j\in\mathcal I} \alpha_{ij} \left( c_{j1} e^{a_{j1} L_j}+ c_{j2} e^{a_{j2} L_j} \right) 
  	+ \dsp \sum_{j\in\mathcal O} \alpha_{ij} \left( c_{j1} + c_{j2}  \right)\ ,\qquad \qquad  i\in \mathcal I,
  	\\ \\
	 - \li \left(c_{i1} + c_{i2}\right) 
  	+\ep \left(c_{i1} a_{i1}+ c_{i2} a_{i2} \right) 
  	=\dsp  \sum_{j\in\mathcal I} \alpha_{ij} \left( c_{j1} e^{a_{j1} L_j}+ c_{j2} e^{a_{j2} L_j} \right) \\ 
  	 +\dsp \sum_{j\in\mathcal O} \alpha_{ij} \left( c_{j1} + c_{j2}\right) \ \qquad \qquad\qquad\qquad\qquad\qquad\qquad \qquad\qquad i\in \mathcal O \ .
   	\end{array}  \right.
\ee

Using the conditions (\ref{bcci}) in (\ref{s1}) and setting 
$$
	\mathcal P_i:=  e^{a_{i1} L_i} (\li  - \ep a_{i1}  ) \ p_i(0)
	- \sum_{j\in\mathcal I} \alpha_{ij}  e^{a_{j1} L_j} p_j(0) 
	 -\sum_{j\in\mathcal O} \alpha_{ij}  e^{-a_{j1} L_j} p_j(L_j)  ,\quad i\in\mathcal I,
$$
$$
	\mathcal P_i:=e^{-a_{i1} L_i}  (-\li  + \ep a_{i1} )p_i(L_i)
	- \sum_{j\in\mathcal I} \alpha_{ij}  e^{a_{j1} L_j} p_j(0) 
	 -\sum_{j\in\mathcal O} \alpha_{ij}  e^{-a_{j1} L_j} p_j(L_j), \quad  i\in\mathcal O,
$$
  $\{ c_{i2}\}_{i\in\M}$ will result as solution of the linear systems of $m$ equations:
$$
	\left\{\begin{array}{ll}
	 	 \dsp  \left ( (\ep a_{i2}-\li) e^{a_{i2} L_i}+ (\li-\ep a_{i1}) e^{a_{i1} L_i} +
	 	 \alpha_{ii} (e^{a_{i2} L_i}-e^{a_{i1} L_i})\right)  c_{i2}
	\\ \\
		\dsp +\sum_{j\in \mathcal I, j\neq i}  \alpha_{ij}  \left( e^{a_{j2} L_j}  -e^{a_{j1} L_j}\right) c_{j2}+
		\sum_{j\in \mathcal O } \alpha_{ij}  \left(1-  e^{(a_{j2}- a_{j1}) L_j}  \right) c_{j2}=-\mathcal P_i\ ,
		\ \ \ \  i\in\mathcal I,
	\\ \\
		\dsp  \left ( (\ep a_{i2}-\li) + (\li-\ep a_{i1}) e^{(a_{i2} -a_{i1}) L_i} +
		\alpha_{ii} (e^{(a_{i2} -a_{i1}) L_i} -1) \right) c_{i2}
	\\ \\
		\dsp -\sum_{j\in \mathcal I}  \alpha_{ij}  \left( e^{a_{j2} L_j}  -e^{a_{j1} L_j}\right) c_{j2}+
		\sum_{j\in \mathcal O, j\neq i } \alpha_{ij}  \left(  e^{(a_{j2}- a_{j1}) L_j} -1 \right) c_{j2}=\mathcal P_i\ ,
		\ \  \ \ \   i\in\mathcal O .
	\end{array}\right.
$$
 \smallskip

Let $H=\{h_{ij}\}$  be the coefficients matrix of the above linear system; then (\ref{radici})  and the assumptions on $\alpha_{ij}$ 
imply that  
$$
	\begin{array}{ll}
		\dsp \sum_{j\in\mathcal M, j\neq i} \vert h_{ji} \vert = (e^{a_{i2} L_i}-e^{a_{i1} L_i}) \sum_{j\in\mathcal M, j\neq i} |\alpha_{ji}|
		\leq \alpha_{ii} (e^{a_{i2} L_i}-e^{a_{i1} L_i}) 
	\\ \\
		\dsp \qquad <  (\ep a_{i2}-\li) e^{a_{i2} L_i}+ (\li-\ep a_{i1}) e^{a_{i1} L_i} +
		\alpha_{ii} (e^{a_{i2} L_i}-e^{a_{i1} L_i})=h_{ii}\ ,
	\end{array}\ \ \ \ 
	i\in\mathcal I\ ,
$$
$$
	\begin{array}{ll}
		\dsp \sum_{j\in\mathcal M, j\neq i} \vert h_{ji} \vert = ( e^{(a_{i2} -a_{i1}) L_i}-1) \sum_{j\in\mathcal M, j\neq i} |\alpha_{ji}|
		\leq \alpha_{ii}  ( e^{(a_{i2} -a_{i1}) L_i}-1)
	\\ \\
		\dsp \qquad <  (\ep a_{i2}-\li) + (\li-\ep a_{i1}) e^{(a_{i2} -a_{i1}) L_i} +
		\alpha_{ii} (e^{(a_{i2} -a_{i1}) L_i} -1)
		=h_{ii}\ ,
	\end{array}\ \ 
	i\in\mathcal O\ ,
$$
so that $H$ has strictly dominant diagonal . It follows that there exists a unique  solution $\{ c_{i2}\}_{i\in\M}$ of the linear system,
and
$\{ c_{i1}\}_{i\in\M}$ con be determined using (\ref{bcci}).

If $f\in L^1(\A)$ then we consider a sequence $\{f_n\}_{n\in\N}$, ${f_n}_i\in C_0^\infty(I_i)$ ( $i\in\M$),  such that ${f_n}\to_{n\to +\infty} f$ in $L^1(\A)$; moreover we consider  the corresponding sequences $\{p_n\}_{n\in\N}$, satisfying (\ref{intpart}), 
and the functions
$$
  	\dsp {v_n}_i (x) = c^n_{i1} e^{a_{i1} x}+ c^n_{i2} e^{a_{i2} x} +{p_n}_i(x)\ ,\qquad i\in\M\ ,
$$
solutions to (\ref{eqlin2}), where $f$ is replaced by $f_n$,
\be
  	\label{eqlinn}{{v''_n}_i} -\frac \li \ep {{v'_n}_i} -\frac 1 \ep {v_n}_i =\frac{{f_n}_i }{\ep}\ ,\qquad  x\in\Ii\ ,\ i\in\M\ .
\ee

The sequence $\{v_n\}_{n\in\N}$ converges to some $\ov v$ in $L^1(\A)$, since the operator $A_0^\ep$ is dissipative in $L^1(\A)$  so that
$\Vert v_n-v_{\ov n}\Vert_1 \leq \Vert f_n-f_{\ov n}\Vert_1$, for $n,\ov n\in\N$.

We can express  each ${p_n}_i$,  for $i\in \mathcal I$, as follows 
$$
	{p_n}_i(x) = \frac 1 {a_{2i}-a_{1i}} 
	 \int_{L_i}^x \frac{{f_n}_i(y)}{\ep} \left( -e^{a_{i1}(x-y)}  +
	e^{a_{i2}(x-y)}  \right) dy  \ ,
$$
 and a similar expression holds for $i\in \mathcal O$; if we define, for $i\in \mathcal I$
$$
	{\ov p}_i(x) =  \frac 1 {a_{2i}-a_{1i}} 
	 \int_{L_i}^x \frac{{f}_i(y)}{\ep} \left( -e^{a_{i1}(x-y)}  +
	e^{a_{i2}(x-y)}  \right) dy  \ ,
$$
we obtain 
$$
	\vert {p_n}_i(x) -{\ov p}_i(x)\vert 
	\leq \frac 1 {a_{2i}-a_{1i}} 
	\int_0^{L_i} \frac{|{f_n}_i(y) - {f}_i(y)|}{\ep}\left(e^{a_{i2}(x-y)} 
	-e^{a_{i1}(x-y)} 
	\right) dy \ ,
	$$
so that $\{{p_n}_i\}$ converges in $\mathcal C(\ov I_i)$ to $\ov p_i$ for each  $i\in \mathcal I$, and similarly, for each
for $i\in \mathcal O$. This fact implies   the convergence of the sequences $\{  c^n_{i2}\}_{n\in\N}$ 
and $\{ c^n_{i1}\}_{n\in\N}$, 
for all $i\in\M$.
Then 
$$
	\ov v_i(x) = \ov c_{i1} e^{a_{i1} x}+ \ov c_{i2} e^{a_{i2} x} +{\ov p}_i(x)
$$ 
and the claim is proved  
 if we show that $\ov v\in D(A_0^\ep)$ and it satisfies (\ref{eqlin2}).

Since, for all $i\in\M$, 
$$
	{p'_n}_i(x) = \frac 1 {a_{2i}-a_{1i}}
		 \int_{L_i}^x {f_n}_i(y) \left(-a_{1i} e^{a_{1i}(x-y)} 
	+ a_{2i} e^{a_{2i}(x-y)} 
		\right) dy \ ,
$$
the functions $\ov p$ belong to $W^{1,1}(\A)$ and 
$$
	\ov p'_i(x) = \frac 1 {a_{2i}-a_{1i}}
	 \int_{L_i}^x f_i(y) \left(-a_{1i} e^{a_{1i}(x-y)} 
	+ a_{2i} e^{a_{2i}(x-y)} 
		\right) dy \ ;
$$
moreover 
$$
	{v'_n}_i(x)= a_{1i} c^n_{1i} e^{a_{1i}x} +a_{1i} c^n_{2i} e^{a_{2i}x} + {p'_n}_i(x)\ ,
$$
$$
	{\ov v'}_i(x)= a_{1i} \ov c_{1i} e^{a_{1i}x} +a_{1i} \ov c_{2i} e^{a_{2i}x} + {\ov p'}_i(x)\ ,
$$
so we infer that 
$$
	\dsp { v_n}_i  \rightarrow_{n\to +\infty}{\ov v}_i  \ \ \t{  in } W^{1,1}(I_i)\ .
$$
From the equation (\ref{eqlinn}) we obtain 
$$-\int_0^{L_i} {\ov v'_i}\phi' = \int_0^{L_i} \phi\left(  \frac \li \ep  {\ov v'_i} +\frac 1 \ep \ov v_i +\frac{f_i}{\ep}\right)\ ,\ \ 
\phi\in C^1_0(I_i)\ ,$$
so $\ov v_i\in W^{2,1}(I_i)$,
$
       |{ v_n}_i -{\ov v}_i | \to 0  \t{ in } W^{2,1}(I_i)
       $
and 
$\ov v_i$ 
satisfies (\ref{eqlin2}) for all $i\in\M $.

Finally, the convergence in $W^{2,1}(I_i)$ implies that 
$\ov v$ satisfies the boundary and transmission conditions . 
\end{proof}

The proof of existence and uniqueness  of solutions to problem  (\ref{PO}), where homogeneous boundary conditions are considered, 
is a direct consequence of the previous propositions.

\bp \label{esistenzaparaomo}
For  every $u^\ep_0\in D(A_0^\ep)$, there exists a unique solution to 
problem (\ref{PO}).
\end{prop}
\begin{proof}
For all $\ep>0$ we easily see that $D(A_0^\ep)$ is dense in  $L^1(\A)$, in fact
$$\{f=(f_1,f_2,...,f_m): f_i\in  \mathcal C^{\infty}_0(I_i)\} \subset D(A_0^\ep)\ .$$
Since $A^\ep$ is an m-dissipative operator in $L^1(\A)$ and $D(A_0^\ep)$ is dense in  $L^1(\A)$,  the Hille -Yosida theorem  implies  that  $A_0^\ep$  generates a linear  semigroup $\mathcal \tau_\ep$. 
Moreover,  $F_i\in  \mathcal C^\infty (I_i)$,  so  the  linear contraction semigroups theory applies  to     problem (\ref{opar}),  (\ref{PO})  (see \cite{CH}), providing the  unique solution $v^\ep \in  \mathcal C([0,+\infty);D(A^\ep_0))\cap  \mathcal C^1([0,+\infty);L^1(\A))$,
$$v^\ep (t) = \mathcal \tau_\ep (t) (u^\ep_0-q)+\int_0^t \mathcal \tau_\ep (t-s) F\ ds \ .$$ 
\end{proof}
\end{section}

The existence and uniqueness of solutions to problem (\ref{para}))-(\ref{uop}) immediately follows from the above proposition, thanks to the 
position $v^\ep=u^\ep -q$ set at the beginning of this section.

\bt \label{esistenzapara}
For every $u^\ep_0\in W^{2,1}(\A)$ satisfying (\ref{uop}) there exists a unique solution $u^\ep$ to 
problem  (\ref{para}) with
$$
	u^\ep\in  \mathcal C([0,+\infty);W^{2,1}(\A))\cap  \mathcal C^1([0,+\infty);L^1(\A))\ ,
$$
and this solution is given by
\be
\label {espu}
	u^\ep (t) = \mathcal \tau_\ep (t) (u^\ep_0-q)   +\int_0^t \mathcal \tau_\ep(t-s) F ds  + q \ .
\ee

\end{theo}

\begin{section}
{\bf Uniform estimates for the solutions of the parabolic problem}

In this section we obtain  some estimates  for the solutions to problems (\ref {para})-(\ref{uop}),
which will be uniform in $\ep$ provided that $\ueio$
 approximates $u_{i0}$ in suitable way when $\ep\to 0^+$ (see next Section).

Let $0<\ep\leq 1$ and let $q$ be the function defined at the beginning of  Section 2.
\begin{prop}\label{stima1}

Let $u^\ep$ be the solution to problem (\ref{para})-(\ref{uop});  then for all $t>0$,
 $$
 	\Vert u^\ep(t)\Vert_{L^1(\A)} \leq \Vert u^\ep_0\Vert_{L^1(\A)} + \eta_1 t+\eta_2\ ,
$$
 where $\eta_1,\eta_2\geq 0$.
\end{prop}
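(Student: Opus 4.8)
The plan is to avoid estimating $u^\ep$ directly and instead work with the shifted unknown $v^\ep=u^\ep-q$ introduced in Section 2, which solves the abstract Cauchy problem (\ref{PO}), namely $v^\ep_t=A_0^\ep v^\ep+F$ with the time-independent forcing $F=\{\ep q_i''-\li q_i'\}_{i\in\M}$ and initial datum $u^\ep_0-q$. Since $\|u^\ep(t)\|_{L^1(\A)}\le\|v^\ep(t)\|_{L^1(\A)}+\|q\|_{L^1(\A)}$, it suffices to bound $\|v^\ep(t)\|_{L^1(\A)}$, and the two nonnegative constants $\eta_1,\eta_2$ will emerge from $\|F\|_{L^1(\A)}$ and $\|q\|_{L^1(\A)}$.

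The quickest route is to use what has already been established: $A_0^\ep$ is densely defined and m-dissipative in $L^1(\A)$, so the semigroup $\tau_\ep$ it generates is a contraction semigroup, $\|\tau_\ep(t)\|_{\mathcal L(L^1(\A))}\le1$ for every $t\ge0$. Feeding this into the Duhamel representation $v^\ep(t)=\tau_\ep(t)(u^\ep_0-q)+\int_0^t\tau_\ep(t-s)F\,ds$ of Proposition \ref{esistenzaparaomo} and using the triangle inequality gives
\[
\|v^\ep(t)\|_{L^1(\A)}\le\|u^\ep_0-q\|_{L^1(\A)}+\int_0^t\|F\|_{L^1(\A)}\,ds=\|u^\ep_0-q\|_{L^1(\A)}+t\,\|F\|_{L^1(\A)}.
\]
Combining this with $\|u^\ep(t)\|\le\|v^\ep(t)\|+\|q\|$ and $\|u^\ep_0-q\|_{L^1(\A)}\le\|u^\ep_0\|_{L^1(\A)}+\|q\|_{L^1(\A)}$ proves the statement with $\eta_1=\|F\|_{L^1(\A)}$ and $\eta_2=2\|q\|_{L^1(\A)}$, both nonnegative.

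A more hands-on alternative, which is also more robust for the sharper estimates to come, is to differentiate (after integrating in $t$ first) the regularized $L^1$ functional $\suM\iIi\ov g_\delta(v^\ep_i)\,dx$ and let $\delta\to0$. Testing the equation for $v^\ep$ with $\ov g'_\delta(v^\ep_i)$ and integrating by parts, the interior second-order term $-\ep\suM\iIi\ov g''_\delta(v^\ep_i)((v^\ep_i)_x)^2\,dx$ is nonpositive by convexity of $\ov g_\delta$, while the forcing contributes at most $\|F\|_{L^1(\A)}$ because $|\ov g'_\delta|\le1$. The outer boundary terms vanish identically, since $v^\ep$ carries homogeneous Dirichlet data there and $\ov g_\delta(0)=\ov g'_\delta(0)=0$. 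What remains are the traces at the inner node $N$: there one substitutes the transmission conditions in (\ref{opar}) to remove $\ep(v^\ep_i)_x(N)$, and in the limit $\delta\to0$ the terms proportional to $\li$ drop out because $\ov g'_\delta(w)w-\ov g_\delta(w)\to0$, leaving $-\sum_{i,j\in\M}\alpha_{ij}v^\ep_j(N)\sgn(v^\ep_i(N))$, which is nonpositive by exactly the sign computation used in the dissipativity proposition (it relies on $\alpha_{ij}=\alpha_{ji}\le0$ for $i\neq j$ and $\sum_{i\in\M}\alpha_{ij}=0$).

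The main obstacle in the direct computation is precisely the handling of the inner-node traces: one must track the orientation convention (the node $N$ sits at $x=L_i$ for $i\in\mathcal I$ and at $x=0$ for $i\in\mathcal O$) and the sign $\beta_i$ when inserting the transmission conditions, and recover the favorable sign only after passing to the limit $\delta\to0$ through the $M$-matrix structure of $(\alpha_{ij})$. The reduction to $v^\ep$ is what makes this feasible, because it kills the outer boundary contributions (via $\ov g'_\delta(0)=0$) that would otherwise involve the uncontrolled derivatives at $e_i$. The remaining technical points, justifying differentiation under the integral using $v^\ep\in\mathcal C^1([0,+\infty);L^1(\A))$ with $\ov g_\delta\in\mathcal C^1$ and $\ov g'_\delta$ bounded, and applying dominated convergence as $\delta\to0$, are routine. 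In either route one ends with $\eta_1=\|F\|_{L^1(\A)}$ and $\eta_2=2\|q\|_{L^1(\A)}$.
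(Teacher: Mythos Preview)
Your primary argument---using the Duhamel representation $u^\ep(t)=\tau_\ep(t)(u^\ep_0-q)+\int_0^t\tau_\ep(t-s)F\,ds+q$ together with the contraction property of $\tau_\ep$---is exactly the paper's proof, and yields the same constants $\eta_1=\|F\|_{L^1(\A)}$, $\eta_2=2\|q\|_{L^1(\A)}$. The hands-on alternative you sketch is not used in the paper for this proposition (though it mirrors the dissipativity computation and the argument of Proposition~\ref{nonne}), and it is correct as outlined.
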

\begin{proof}
From Theorem \ref{esistenzapara} we know that
$$
	u^\ep (t) = \mathcal \tau_\ep (t) (u^\ep_0-q)   +\int_0^t \mathcal \tau_\ep(t-s) F ds  + q
$$
so, thanks to the properties of contraction semigroups,
$$
	 \Vert \ue \Vert_{L^1(\A)} \leq  \Vert \ue_0 \Vert_{L^1(\A)} + 2 \Vert q\Vert_{L^1(\A)} +t  \suM \Vert \ep q_i'' (x)-\li q'_i(x) \Vert_{L^1(\A)}
	  \  .
$$
\end{proof}

Using the properties  of the semigroup $\mathcal \tau_\ep$ we can also prove the following estimate for $u^\ep_t$.

\begin {prop}\label {stima3}
Let $u^\ep$ be the solution to problem (\ref{para})-(\ref{uop}), 
 then, for all $t>0$,
$$
	 \Vert \ue_t (t)\Vert_{L^1(\A)} \leq
	\suM  \Vert -\li {{u^\ep_0}}_x +\ep   {{u^\ep_0}}_{xx}\Vert_{L^1(\A)} 
	 	\ .
$$
\end{prop}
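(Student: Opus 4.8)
The plan is to exploit the autonomy of problem (\ref{para}): neither the equations, nor the outer boundary values ${u_0}^\ep_i(e_i)$, nor the transmission conditions at $N$ depend explicitly on $t$, and in the representation formula (\ref{espu}) both the lifting $q$ and the forcing $F=\{\ep q_i''-\li q_i'\}_{i\in\M}$ are time-independent. The heart of the argument is therefore to show that the time increment of $u^\ep$ is governed by the very same contraction semigroup $\tau_\ep$ generated by $A_0^\ep$, acting on the increment of the initial datum.

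Concretely, for $h>0$ I would start from (\ref{espu}) and write $u^\ep(t+h)-u^\ep(t)$ as the difference of the two representations; the two copies of $q$ cancel immediately. Changing variables in the two Duhamel integrals reduces their difference to $\int_t^{t+h}\tau_\ep(\sigma)F\,d\sigma$, and the semigroup property $\tau_\ep(\sigma)=\tau_\ep(t)\tau_\ep(\sigma-t)$ lets me pull $\tau_\ep(t)$ out of it; combined with $\tau_\ep(t+h)(\ue_0-q)-\tau_\ep(t)(\ue_0-q)=\tau_\ep(t)\big[\tau_\ep(h)(\ue_0-q)-(\ue_0-q)\big]$, the whole increment factors as $\tau_\ep(t)$ times a bracket which, by a second use of (\ref{espu}) read at time $h$, is recognised as $u^\ep(h)-\ue_0$. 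This yields
\[
u^\ep(t+h)-u^\ep(t)=\tau_\ep(t)\big[u^\ep(h)-\ue_0\big].
\]

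Since $\tau_\ep$ is a contraction semigroup in $L^1(\A)$ (Theorem \ref{esistenzapara} together with the two preceding propositions), this gives $\Vert u^\ep(t+h)-u^\ep(t)\Vert_{L^1(\A)}\le\Vert u^\ep(h)-\ue_0\Vert_{L^1(\A)}$. Dividing by $h$ and letting $h\to0^+$, the regularity $u^\ep\in\mathcal C^1([0,+\infty);L^1(\A))$ makes the left-hand quotient converge to $\Vert\ue_t(t)\Vert_{L^1(\A)}$ and the right-hand one to $\Vert\ue_t(0)\Vert_{L^1(\A)}$, so $\Vert\ue_t(t)\Vert_{L^1(\A)}\le\Vert\ue_t(0)\Vert_{L^1(\A)}$. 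Finally, reading the equation in (\ref{para}) at $t=0$ gives $\ue_t(x,0)=-\li{u^\ep_0}_x+\ep{u^\ep_0}_{xx}$ on each arc, which is exactly the claimed bound $\suM\Vert-\li{u^\ep_0}_x+\ep{u^\ep_0}_{xx}\Vert_{L^1(\A)}$.

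The only genuinely delicate point is the factorisation identity $u^\ep(t+h)-u^\ep(t)=\tau_\ep(t)[u^\ep(h)-\ue_0]$: one must check that the changes of variable together with the semigroup law really let $\tau_\ep(t)$ come out of the Duhamel term, and this cancellation succeeds precisely because $q$, $F$ and the outer boundary data carry no $t$-dependence. Once this is in place the remaining steps (the contraction estimate and the $h\to0^+$ passage) are routine. An alternative would be to argue directly that $w=\ue_t$ solves $w_t=A_0^\ep w$ with datum $w(0)=-\li{u^\ep_0}_x+\ep{u^\ep_0}_{xx}$ and then apply contraction, but the difference-quotient route is preferable since it avoids differentiating the transmission conditions in time and checking that $w(0)\in D(A_0^\ep)$.
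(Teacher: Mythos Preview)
Your argument is correct. The factorisation identity $u^\ep(t+h)-u^\ep(t)=\tau_\ep(t)\big[u^\ep(h)-u^\ep_0\big]$ is exactly right (the two Duhamel integrals both reduce to $\int_0^h\tau_\ep(\sigma)F\,d\sigma$ after substitution, so the bracket does match $u^\ep(h)-u^\ep_0$), and the passage to the limit is justified by the $\mathcal C^1$ regularity in $L^1(\A)$.

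The paper reaches the same conclusion by a slightly more direct route: rather than working with finite differences, it differentiates the representation formula (\ref{espu}) in $t$ and uses the commutation $A_0^\ep\,\tau_\ep(t)=\tau_\ep(t)\,A_0^\ep$ on $D(A_0^\ep)$ (valid since $u_0^\ep-q\in D(A_0^\ep)$) to obtain the pointwise identity
\[
u^\ep_t(t)=\tau_\ep(t)\big[A_0^\ep(u_0^\ep-q)+F\big],
\]
after which contraction and the algebraic observation $A_0^\ep(u_0^\ep-q)+F=\ep\,{u_0^\ep}_{xx}-\li\,{u_0^\ep}_x$ finish immediately. Your difference-quotient version is the discrete counterpart of this semigroup identity and has the mild advantage of never invoking $A_0^\ep$ explicitly, so one need not check that the commutation applies; the paper's version is a line shorter because it bypasses the intermediate estimate $\Vert u^\ep_t(t)\Vert\le\Vert u^\ep_t(0)\Vert$ and lands directly on the initial datum. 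Both routes encode the same mechanism---autonomy of the problem lets $\tau_\ep(t)$ factor out of the time derivative---and the choice between them is a matter of taste.
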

\begin{proof}
Since $(\ue_0 -q) \in D(A^\ep_0)$ the following equality holds
$$A^\ep_0 \mathcal \tau_\ep (u_0^\ep -q) =  \mathcal \tau_\ep A^\ep_0 (u_0^\ep -q)\ ,$$
hence, using (\ref{espu}) (see \cite{CH}), we can write

$$
	 \Vert u^\ep_t(t) \Vert_{L^1(\A)}= \Vert \left( \mathcal \tau_\ep(t)(u^\ep_0-q)\right)_t + \mathcal \tau_\ep(t) F \Vert_{L^1(\A)}
$$
$$
	=  \Vert A^\ep_0 \mathcal \tau_\ep(t)(u^\ep_0-q) +  \mathcal \tau_\ep (t)F \Vert_{L^1(\A)} =  
	 \Vert  \mathcal \tau_\ep(t)(A^\ep_0(u^\ep_0-q)+ F) \Vert_{L^1(\A)}
$$
$$
	\leq \Vert  A^\ep_0(u^\ep_0 - q) + F \Vert_{L^1(\A)}
	= \Vert {\ep u^\ep_0}_{xx} -\li {u^\ep_0}_x \Vert_{L^1(\A)}  \ .
$$ 
\end{proof}

Now  we can prove the following estimate for  $u^\ep_x$.
\begin{prop} \label{ux2} 
Let (\ref{anz}) hold
and let $\ue$ be the solution to problem (\ref{para})-(\ref{uop}); then, for all $T>0$  and all  $i\in\M$, for small $\ep$,

$$
	\begin{array}{ll}
	\dsp   \sup_{t\in[0,T]} \int_{I_i}  \vert {\u^\ep}_{x} (x,t)\vert dx 
	&
	\leq  \dsp C\sum_{j\in\mathcal M} \Vert   {u^\ep_{0j}}\Vert_{W^{1,1}(I_j)} 
	\\ \\
	\dsp 
	&+ C
	 \dsp \sup_{t\in[0,T]}  \sum_{j\in\mathcal M}\left(
     \Vert  {u_j^\ep }_{t} ( t)\Vert_{L^1(I_j)}+\Vert u_j^\ep ( t)\Vert_{L^1(I_j)} \right) \ ,
   \end{array} 
	$$
where $C$ depends on $\lambda_\iota$,$L_\iota$, $ K_{\iota j}$ ($\iota,j\in\M$) and Sobolev constants.
\end{prop}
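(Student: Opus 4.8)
The plan is to derive a differential inequality for the quantity $\suM \iIi |{\uei}_x(x,t)|\,dx$ and then integrate it in time. Since the equation in (\ref{para}) is linear with constant coefficients, the spatial derivative $z_i:={\uei}_x$ solves the same parabolic equation $z_{it}=-\li z_{ix}+\ep z_{ixx}$. I would test this equation against $\ov g_\delta'(z_i)$, with $\ov g_\delta$ as in (\ref{ovg}), and integrate over $I_i$, exactly as in the dissipativity proof. The term coming from $\ep z_{ixx}$ produces, after one integration by parts, a bulk contribution $-\ep\iIi \ov g_\delta''(z_i)(z_{ix})^2\,dx\le0$ (by convexity of $g_\delta$) which is discarded, plus boundary terms; the term coming from $-\li z_{ix}$ produces only boundary terms. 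Letting $\delta\to0$ and using (\ref{g}), I expect to obtain $\frac{d}{dt}\suM\iIi|{\uei}_x|\,dx\le(\text{node and outer--node boundary terms})$.

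The next step is to show that the outer--node boundary terms vanish while the inner--node terms are controlled by the traces of ${\uei}_t$ at $N$. Because the outer data $\uei(e_i,t)=\ueio(e_i)$ are time--independent, one has ${\uei}_t(e_i,t)=0$, hence by the equation $\ep z_{ix}(e_i)=\li z_i(e_i)$; combined with the elementary identity $\ov g_\delta'(w)w-\ov g_\delta(w)\to_{\delta\to0}0$ this makes every outer--node contribution vanish in the limit. At the inner node I would eliminate the unwanted second--order trace $z_{ix}(N)={\uei}_{xx}(N)$ through the equation itself, $\ep{\uei}_{xx}(N,t)={\uei}_t(N,t)+\li{\uei}_x(N,t)$; the terms proportional to $\li$ again cancel against $-\li\ov g_\delta(z_i(N))$ in the limit $\delta\to0$, leaving a residual bounded by $|{\uei}_t(N,t)|$ for each arc. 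This yields $\frac{d}{dt}\suM\iIi|{\uei}_x|\,dx\le\suM|{\uei}_t(N,t)|$, and integrating on $(0,t)$ produces the initial--data term $\suM\iIi|{\ueio}_x|\,dx\le C\suM\Vert\ueio\Vert_{W^{1,1}(I_i)}$ together with $\int_0^t\suM|{\uei}_t(N,s)|\,ds$.

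The main obstacle is to bound the inner--node traces uniformly in $\ep$ by $\suM(\Vert{\uei}_t\Vert_{L^1}+\Vert\uei\Vert_{L^1})$ without reintroducing $\Vert{\uei}_x\Vert_{L^1}$ with a constant $\ge 1$, which a naive trace inequality would do. Here I would use the one--dimensional embedding $W^{1,1}(I_j)\hookrightarrow L^\infty(I_j)$ (the origin of the Sobolev constants in the statement) to estimate the node values, and then the transmission conditions (\ref{cftc}) --- rewritten via (\ref{aij}) --- to express the relevant node data of $\uei$ (and of ${\uei}_t$, which satisfies transmission conditions of the same form, obtained by differentiating (\ref{cftc}) in $t$) in terms of the remaining quantities. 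Assumption (\ref{anz}), ensuring $\alpha_{ii}>0$ for $i\in\mathcal I$, together with the sign structure (\ref{aij}), is what makes the corresponding node linear system solvable with constants independent of $\ep$, so that the surviving $\Vert{\uei}_x\Vert_{L^1}$ contributions either carry a coefficient that is absorbed into the left--hand side for $\ep$ small or are eliminated, the remainder being controlled by the already established uniform bounds of Proposition \ref{stima1} and Proposition \ref{stima3}. Closing this node estimate cleanly is the delicate technical core; an alternative, equivalent route is to solve, at each fixed $t$, the first--order linear ODE $\ep z_i'-\li z_i={\uei}_t$ in $x$ starting from the node, which directly produces the $\tfrac1{\li}\Vert{\uei}_t\Vert_{L^1(I_i)}$ term and isolates the same node trace for treatment by the transmission conditions.
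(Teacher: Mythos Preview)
Your primary approach --- differentiate in $x$, derive a time--differential inequality $\frac{d}{dt}\suM\iIi|{\uei}_x|\,dx\le\suM|{\uei}_t(N,t)|$, and integrate in $t$ --- is \emph{not} the route the paper takes, and it carries a real gap. After integrating you are left with $\int_0^T\suM|{\uei}_t(N,s)|\,ds$, a time--integrated pointwise trace of $u^\ep_t$ at the node. To match the statement (whose constant $C$ is independent of $T$) you would need $\sup_s|{\uei}_t(N,s)|$ controlled by $\suMj(\Vert u^\ep_{jt}\Vert_{L^1}+\Vert u^\ep_j\Vert_{L^1})$. A Sobolev trace would require $\Vert (u^\ep_t)_x\Vert_{L^1}$, which you do not have; trying to produce it from the equation brings in $\Vert u^\ep_{xx}\Vert_{L^1}=\ep^{-1}\Vert u^\ep_t+\li u^\ep_x\Vert_{L^1}$ with a bad $\ep^{-1}$. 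The transmission identities for $u^\ep_t$ you invoke give $m$ relations among $2m$ node quantities $(u^\ep_{jt}(N),\ep(u^\ep_{jt})_x(N))$ and do not, by themselves, yield an $\ep$--uniform bound on $|u^\ep_{it}(N)|$; assumption (\ref{anz}) does not change this. So the ``delicate technical core'' you flag is in fact an obstruction for this route.

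The paper's argument is pointwise in $t$ and never integrates in time. One multiplies the \emph{undifferentiated} equation $u^\ep_{it}=\ep u^\ep_{ixx}-\li u^\ep_{ix}$ by $\ov g'_\delta(u^\ep_{ix})$ and integrates in $x$; the drift term produces directly $\li\iIi|u^\ep_{ix}(t)|\,dx$ on the left, while the right is bounded by $\Vert u^\ep_{it}(t)\Vert_{L^1(I_i)}+\ep|u^\ep_{ix}(L_i,t)|$. The boundary term is handled by writing $\ep|u^\ep_{ix}(L_i)|\le|\ep u^\ep_{ix}(L_i)-\li u^\ep_i(L_i)|+\li|u^\ep_i(L_i)|$ and observing that the flux $\ep u^\ep_{ix}-\li u^\ep_i$ has $x$--derivative equal to $u^\ep_{it}$, so its $L^\infty$ trace is Sobolev--controlled by $\Vert u^\ep_{it}\Vert_{L^1}+\ep\Vert u^\ep_{ix}\Vert_{L^1}+\li\Vert u^\ep_i\Vert_{L^1}$; the $\ep\Vert u^\ep_{ix}\Vert_{L^1}$ is absorbed for small $\ep$. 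This settles the estimate for $i\in\mathcal O$ (where $u^\ep_i(L_i)$ is initial data), and then $\Vert u^\ep_i\Vert_{L^\infty}$ is available on outgoing arcs. For $i\in\mathcal I$ one still needs $|u^\ep_i(L_i,t)|$ at the node: here the paper uses a maximum argument --- pick the incoming arc and time where $u^\ep_j(L_j,t)$ is largest, apply the transmission identity there, and use (\ref{anz}) (which gives $\alpha_{kk}+\sum_{j\in\mathcal I,j\neq k}\alpha_{kj}=\sum_{j\in\mathcal O}K_{kj}>0$) together with the already--obtained $L^\infty$ bound on the outgoing arcs to close. This is where (\ref{anz}) actually enters.

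Your ``alternative route'' --- solving $\ep z_i'-\li z_i=u^\ep_{it}$ as an ODE in $x$ from the node and integrating the resulting formula over $I_i$ --- reproduces exactly the inequality $\li\iIi|u^\ep_{ix}|\le\Vert u^\ep_{it}\Vert_{L^1}+\ep|u^\ep_{ix}(N)|$ and is therefore equivalent to the paper's first step; had you developed it (with the flux--Sobolev trick and the max argument above), you would have obtained the proof.
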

\begin{proof}
Using  the function (\ref{ovg}) we can write,
for each  $i\in\M$ and $t>0$, 
 $$
 	\int_{I_i} \left[{\u^\ep}_t \ \ov g'_\delta({\u^\ep}_x)\right](x,t) dx
 	=  \int_{I_i}\left[ \left(\ep  {\u^\ep}_{xx}
 	- \li {\u^\ep}_{x} \right)\ \ov g'_\delta({\u}_x)\right](x,t) dx
 $$
 whence 
 $$
   	\int_{I_i}  \li \left[{\u^\ep}_{x} \ \ov g'_\delta({\u^\ep}_x)\right](x,t) dx =
$$
 $$
 	= \ep  [{\u^\ep}_{x}
 	\ \ov  g'_\delta({\u^\ep}_x)] (L_i,t) -\ep [ {\u^\ep}_{x}
 	\ \ov  g'_\delta({\u^\ep}_x)] (0,t)
$$
$$
  	-\int_{I_i} \ep \left[{\u^\ep}_x \ \ov g''_\delta({\u^\ep}_x) {\u^\ep}_{xx}\right](x,t) dx
   	- \int_{I_i}\left [{\u^\ep}_t \ \ov g'_\delta({\u^\ep}_x)\right ] (x,t)dx\ .
$$
 
 Now, let $\delta$ go to zero and we use the dominated convergence theorem, 
  taking  into account that ${\u^\ep}_x, {\u^\ep}_{xx},{\u^\ep}_t \in L^1(I_i)$ and the properties of  $\ov g_\delta$ (see (\ref{g})). So we obtain
 \be
    \label{iii}\begin{array}{ll}
    \dsp   \int_{I_i}  \li \vert {\u^\ep}_{x}(t)\vert  dx \leq  \Vert  {\u^\ep }_{t} (t)\Vert_{L^1(I_i)} +
   \ep  \vert {\u^\ep}_{x}
    (L_i,t) \vert-\ep  \vert {\u^\ep}_{x}(0,t)\vert
    \\  \\ \dsp  \leq   \Vert  {\u^\ep }_{t} (t)\Vert_{L^1(I_i)} +
     \vert \ep {\u^\ep}_{x}(L_i,t) -\li \u^\ep(L_i,t) \vert  + \vert \li \u^\ep(L_i,t) \vert\\ \\
     \leq  \Vert  {\u^\ep }_{t} (t)\Vert_{L^1(I_i)}+ C^i_{0} \Vert ( \ep {\u^\ep}_x -\li \u^\ep)(t)\Vert_{W^{1,1}(I_i)} 
+ \vert \li \u^\ep(L_i,t) \vert\\ \\
	\leq C^i_{1} \left(\ep \Vert {\u}_x(t)\Vert_{L^1(I_i)} +\li  \Vert {\u} (t)\Vert_{L^1(I_i)} 
	+ \Vert {\u}_t(t)\Vert_{L^1(I_i)}
	\right)    + \vert \li \u^\ep(L_i,t) \vert
     \ ,\end{array}
 \ee
  where we used the equation satisfied by  $\u^\ep$;
 then, for small $\ep$, 
 \be \label {iiii}
  \dsp   \int_{I_i}  \vert {\u^\ep}_{x}(t)\vert  dx \leq
  C^i_{2} \left(
   \Vert {\u}(t)\Vert_{L^1(I_i)} 
	+ \Vert {\u}_t(t)\Vert_{L^1(I_i)}
	\right)    + \vert \li \u^\ep(L_i,t) \vert\ ,
 \ee
 where $C^i_{0},C^i_{1}, C^i_{2}$ depend on Sobolev constants and on $\li$.
 
For $i\in \mathcal O$, the claim easily follows from (\ref{iiii}), since   $\u^\ep(L_i,t)={\u}_0^\ep(L_i,t)$.
  
  As a consequence we obtain the $L^\infty$-bound for $\u^\ep$, $i\in\mathcal O$,
\be
      \label{linO}\Vert \u^\ep(t)\Vert _{L^\infty(I_i)} \leq C^i_{3}
       \left(\Vert  {\u^\ep }_{t} (t)\Vert_{L^1(I_i)}+\Vert \u^\ep(t)\Vert_{L^1(I_i)}
       + \Vert   {{u_i^\ep}_0}\Vert_{W^{1,1}(I_i)} 
        \right)
       \ ,\ \ t>0\ ,
\ee
 where $C^i_{3}$ depends on $\li$ and   Sobolev constants. 
 
 In order to obtain the estimate in the claim for the incoming arcs, we fix $\ep$ and  $$\ov U_{\ep}:=\dsp \max_{j\in\mathcal I} \max_{t\in [0,T]} [u^\ep_j(L_j,t)]^+\ ,\qquad \underline U_{\ep}:=\dsp - \max_{j\in\mathcal I} \max_{t\in [0,T]} [u^\ep_j(L_j,t)]^-\ .$$
  If $\ov U_{ \ep}>0$, let   $ k\in\mathcal I$ and $ \hat t\in[0,T]$ be such that 
 $u_{k}^\ep(L_{k}, \hat t)=\ov U_\ep$; then, arguing as in (\ref{iii}), we have 
   \be\label{u1}
     \begin{array}{ll}
     \dsp \left  \vert    \sum_{j\in \M}\alpha_{kj} u^\ep_j(N,\hat t)\right \vert  =\left\vert [\ep {u_k^\ep}_x -\lambda_k u_k^\ep](L_k,\hat t) \right\vert \\ \\
     \leq C^k_{0}\left( \Vert {u_k^\ep}_t (\hat t)\Vert_{L^1(I_k)} +
      \ep \Vert  {u_k^\ep}_x(\hat t) \Vert_{L^1(I_k)}+ \lambda_k\Vert u_k^\ep(\hat t)\Vert_{L^1(I_k)}\right) \end{array}
 \ee
whence
 \be\label{u2}
     \begin{array}{ll}
     \dsp
       \ov U_{\ep} \left(   \alpha_{kk}  +  \sum_{j\in \mathcal I, j\neq k }\alpha_{kj} \right) \leq 
       \alpha_{kk} \ov U_\ep +  \sum_{j\in \mathcal I, j\neq k }\alpha_{kj} u^\ep_j(L_j,\hat t) 
       =\sum_{j\in \mathcal I}\alpha_{kj} u^\ep_j(L_j,\hat t)
        \\ \\
          \leq \dsp \left \vert \sum_{j\in \mathcal O }\alpha_{kj} u^\ep_j(0,\hat t)\right\vert +
     C_{0}^k\left( \Vert {u_k^\ep}_t (\hat t)\Vert_{L^1(I_k)} +
      \ep \Vert  {u_k^\ep}_x(\hat t) \Vert_{L^1(I_k)}+ \lambda_k \Vert u_k^\ep(\hat t)\Vert_{L^1(I_k)}\right) \ ; \end{array}
 \ee
 now we use (\ref{anz}) to say that $\left(   \alpha_{kk}  +  \sum_{j\in \mathcal I, j\neq k }\alpha_{kj} \right)>0$, and  (\ref{linO}) to obtain
 $$
  	\ov U_{\ep}\leq C_4
	 \dsp\sum_{j\in\mathcal O\cup \{k\}}\left(\Vert  {u_j^\ep }_{t} (\hat t)\Vert_{L^1(I_j)}+\Vert u_j^\ep(\hat t)\Vert_{L^1(I_j)} 
    	+ \Vert   {u^\ep_{0j}}\Vert_{W^{1,1}(I_j)} \right) 
     	+ C^k_0  \ep \Vert   {u_k}_x^\ep(\hat t)\Vert_{L^{1}(I_k)},
$$
    where $C_4$ is a positive constant depending on Sobolev constants, on $\li$ and on the coefficients $\alpha_{ij}$ ($i,j\in\M$).

\noindent As for the lower bound, if $\underline U_\ep<0$
 let $ h\in\mathcal I$ and $ \tilde t\in[0,T]$ be such that 
 $u_{h}^\ep(L_{h}, \tilde t)=\underline U_{\ep}$;  arguing as above
  we obtain 
$$
	\underline U_{\ep}
 	\geq -C_5
 	\dsp\sum_{j\in\mathcal O\cup \{h\}}\left(\Vert  {u_j^\ep }_{t} (\tilde t)\Vert_{L^1(I_j)}+\Vert u_j^\ep(\tilde t)\Vert_{L^1(I_j)} 
    	+ \Vert   {u^\ep_{0j}}\Vert_{W^{1,1}(I_j)} \right) 
   	- C^h_0  \ep \Vert   {u_h}_x^\ep(\tilde t)\Vert_{L^{1}(I_h)},
$$
   where $C_5$ is a positive constant depending on Sobolev constants, on $\li$  and on the coefficients $\alpha_{ij}$
   ($i,j\in\M$).

So, we achieved  the   $L^\infty$-bound for $\u^\ep (L_i,t)$, $i\in\mathcal I$,
\be
      \label{linOO}
      	\begin{array}{ll}\Vert \u^\ep(L_i,\cdot)\Vert _{L^\infty(0,T)}
    	\dsp   \leq C_{6} \left( \ep \sum_{j\in\mathcal I}   \sup_{s\in[0,T]} \Vert   {u_j}_x^\ep (s)\Vert_{L^{1}(I_j)}
	+\suMj\Vert   {u^\ep_{0j}}\Vert_{W^{1,1}(I_j)} \right)
          \\ \\
       	+ C_{6}
     	\dsp   \sup_{s\in[0,T]} \sum_{j\in\mathcal M}\left(
    	 \Vert  {u_j^\ep }_{t} (s)\Vert_{L^1(I_j)}+\Vert u_j^\ep (s)\Vert_{L^1(I_j)} 
   	 \right)
    \ ,
   \end{array} \ee
  where $C_6$ depends on   Sobolev constants, on $\lambda_\iota$  and on the coefficients $\alpha_{\iota j}$ ($\iota,j\in\M$), which allows us to 
 control the quantity $ \vert \li \u^\ep(L_i,t) \vert$ in (\ref{iiii}) also when $i\in\mathcal I$, obtaining, for $t\in[0,T]$,
$$
  \begin{array}{ll}
	\dsp \int_{I_i}  \vert {\u^\ep}_{x} (x,t)\vert dx \leq 
	C^i_{2} \left(
	\Vert {\u}(t)\Vert_{L^1(I_i)} 
	+ \Vert {\u}_t(t)\Vert_{L^1(I_i)}
	\right)
     \\ \\
	\dsp +\li C_6 \left( \ep  \sum_{j\in\mathcal I}
	   \sup_{s\in[0,T]}\Vert   {u_j}_x^\ep ( s)\Vert_{L^{1}(I_j)} 
	 + \suMj \Vert   {u^\ep_{0j}}\Vert_{W^{1,1}(I_j)} \right)
      \\ \\
    	\dsp   +\li  C_{6}
     	\dsp \sup_{s\in[0,T]}  \sum_{j\in\mathcal M}\left(
    	 \Vert  {u_j^\ep }_{t} (s)\Vert_{L^1(I_j)}+\Vert u_j^\ep (s)\Vert_{L^1(I_j)}     \right)\ ;
   \end{array} 
$$
finally
$$
	\begin{array}{ll}
	\dsp  \sum_{i\in\mathcal I} \sup_{t\in[0,T]} \int_{I_i}  \vert {\u^\ep}_{x}  (x,t)\vert dx \leq  C_7 \sum_{j\in\mathcal M} \Vert   {u^\ep_{0j}}	 
	\Vert_{W^{1,1}(I_j)}
      \\ \\
	\dsp+C_7 \sup_{t\in[0,T]}  \sum_{j\in\mathcal M}\left(
     	\Vert  {u_j^\ep }_{t} ( t)\Vert_{L^1(I_j)}+\Vert u_j^\ep ( t)\Vert_{L^1(I_j)} \right)
   	\dsp + C_7 \ep \sum_{j\in\mathcal I}  \sup_{t\in[0,T]} 
    	\Vert   {u_j}_x^\ep ( t)\Vert_{L^{1}(I_j)}  , \end{array} 
$$
	where $C_7$ depends on Sobolev constants, $\lambda_\iota$, $\alpha_{\iota j}$ ($\iota,j\in\M$).
For small $\ep$ we have the claim.
\end{proof}

\medskip

Finally we prove the following comparison result, relying on the fact that 
the functions $U_i(x)=0$,  for all $x\in I_i$ and all $i\in\M$, satisfies the equations and the transmission conditions in (\ref{para}).

\begin{prop} \label{nonne}
Let $u^\ep$ be the solution to problem (\ref{para})-(\ref{uop});
if ${u^\ep_0}_i(e_i)\geq 0$ for all $i\in\M$,  then, for all $t>0$,
$$ \dsp \suM   \iIi  [ \ue_i(x,t)]^- dx  \leq \suM  \iIi  [ {\ue_0}_i(x)]^- dx \ ;$$
in particular, if ${u^\ep_0}_i(x)\geq 0$ then $u^\ep_i(x,t)\geq 0$ .
\end{prop}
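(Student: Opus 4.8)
The plan is to run an $L^1$-type estimate on the negative part of $\ue$, i.e. to compare $\ue$ with the stationary zero solution, using a smooth convex regularization of $w\mapsto[w]^-$ built from the function $g_\delta$ of (\ref{gg}). Concretely I would set $h_\delta(w):=g_\delta(-w)$, so that by (\ref{gg})--(\ref{g}) we have $h_\delta\in\mathcal C^1(\R)$, $h_\delta(w)\to[w]^-$ and $h_\delta'(w)\to-\chi_{(-\infty,0)}(w)$ pointwise as $\delta\to0$, with $|h_\delta(w)|\le|w|$, $|h_\delta'(w)|\le1$, $h_\delta''\ge0$, and --- crucially --- $h_\delta(w)=h_\delta'(w)=0$ identically for $w\ge0$.

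First I would multiply the $i$-th equation in (\ref{para}) by $h_\delta'(\uei)$, integrate over $I_i$ and integrate by parts, obtaining
$$\frac{d}{dt}\iIi h_\delta(\uei)\,dx = -\li[h_\delta(\uei)]_0^{L_i} + \ep[h_\delta'(\uei)\,{\uei}_x]_0^{L_i} - \ep\iIi h_\delta''(\uei)({\uei}_x)^2\,dx.$$
The volume term on the right is $\le0$ by convexity, so it is discarded. Next I would dispose of the outer-node contributions: since $\uei(e_i,t)={\ue_0}_i(e_i)\ge0$ and $h_\delta,h_\delta'$ vanish identically on $[0,\infty)$, every boundary term at $e_i$, advective and diffusive alike, is exactly zero. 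Writing $V_i:=\uei(N,t)$ and using (\ref{cftc}) to substitute $\ep{\uei}_x(N)=\li V_i-\beta_i\suMj\alpha_{ij}V_j$, a short computation collapses the remaining inner-node terms, for both $i\in\mathcal I$ and $i\in\mathcal O$, into the single expression $\beta_i\li\big(V_ih_\delta'(V_i)-h_\delta(V_i)\big)-\suMj\alpha_{ij}V_jh_\delta'(V_i)$. Summing over $i$ and integrating in time yields
$$\suM\iIi h_\delta(\uei(t))\,dx - \suM\iIi h_\delta({\ue_0}_i)\,dx \le \iot\suM\Big[\beta_i\li\big(V_ih_\delta'(V_i)-h_\delta(V_i)\big)-\suMj\alpha_{ij}V_jh_\delta'(V_i)\Big]\,ds.$$

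I would then let $\delta\to0$ by dominated convergence, which is legitimate because the traces $V_i(s)$ are continuous, hence bounded, on $[0,t]$, and all integrands are dominated by a multiple of $\suM|V_i|$. In the limit the first group vanishes, since $V_ih_\delta'(V_i)-h_\delta(V_i)\to0$ pointwise (both for $V_i\ge0$ and $V_i<0$), while $h_\delta'(V_i)\to-\chi_{\{V_i<0\}}$ turns the second group into $\sum_{i,j\in\M}\alpha_{ij}V_j\chi_{\{V_i<0\}}$. The entire estimate thus reduces to showing that this inner-node quantity is nonpositive.

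This last inequality is the step I expect to be the main obstacle, and it is exactly where the algebraic structure of the $K_{ij}$ enters (this is the time-dependent analogue of the dissipativity argument). Writing $\chi_i:=\chi_{\{V_i<0\}}$ and using $\suMj\alpha_{ij}V_j=\sum_{j\ne i}K_{ij}(V_i-V_j)$, which follows from (\ref{aij}) together with $\alpha_{ii}=\sum_{j\ne i}K_{ij}$, the quantity becomes $\sum_{i\ne j}K_{ij}\chi_i(V_i-V_j)$; symmetrizing with $K_{ij}=K_{ji}$ gives $\tfrac12\sum_{i\ne j}K_{ij}(\chi_i-\chi_j)(V_i-V_j)$. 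A short case analysis shows $(\chi_i-\chi_j)(V_i-V_j)\le0$ always --- it is $0$ when $V_i,V_j$ lie in the same sign class and strictly negative otherwise --- and since $K_{ij}\ge0$ every summand is $\le0$. Hence the right-hand side is nonpositive and $\suM\iIi[\uei(t)]^-\,dx\le\suM\iIi[{\ue_0}_i]^-\,dx$. Finally, if ${\ue_0}_i\ge0$ on every arc the right-hand side is zero, forcing $[\uei(t)]^-=0$ a.e.; by the smoothness of $\ue$ this gives $\uei\ge0$ everywhere, the asserted positivity.
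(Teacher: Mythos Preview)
Your proof is correct and follows essentially the same route as the paper's: multiply by a smooth approximation of the derivative of $[\cdot]^-$, integrate by parts, kill the outer-node terms using $u^\ep_{0i}(e_i)\ge 0$, and show the inner-node contribution is nonpositive via the sign structure of the coefficients. The only cosmetic differences are that the paper integrates the advective term by parts (leaving a $g_\delta''$ bulk term to be sent to zero) rather than writing it as an exact derivative, and it verifies the node-sum inequality by a direct sign analysis of $\sum_i\alpha_{ij}\,\sgn(V_j)\,\sgn^+(-V_i)$ rather than your (equivalent, slightly cleaner) symmetrization with $K_{ij}$.
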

\begin{proof}
Following standard methods (for example see  \cite{Daf}),  using  the function (\ref{gg}) we can write
 $$ 
     \dsp\suM  \left(  \iIi  g_{\delta}(- \ue(x,t)) dx -  \iIi  g_{\delta}( -\ue(x,0)) dx \right)  =
      \suM   \int_0^t \iIi  \left( g_{\delta}(- \ue)(x,s) \right)_s dx ds
 $$
  $$
     \leq \int_0^t    \sum_{i,j\in\M} \alpha_{ij} \ue_j (N,s)
     g'_\delta( -\ue_i(N,s)) ds
  $$
  $$
  + \suM  \int_0^t \iIi  \li g''_\delta( -{\ue}(x,s)) ( {\ue_i}(x,s))_x  {\ue_i}(x,s)
      dx\  ds \ .
  $$
  Since   $\ue_i(t), {\ue_i}_x(t)\in L^1(I_i)$ and $\ue_i(N)\in L^1(0,t)$ for all $i\in\M$, thanks to (\ref{g})
we can let $\delta$ go to zero 
using the dominated convergence theorem, to obtain 
$$ 
     \suM   \iIi  [ \ue_i(x,t)]^- dx  - \suM  \iIi  [ {\ue_i}(x,0)]^- dx 
$$
$$
      \dsp \leq \int_0^t \suM 
      \sum_{j\in\M} \alpha_{ij} |\ue_j (N,s)|   \sgn( {\ue_j}(N,s)) \sgn^+( -{\ue_i}(N,s)) ds  \leq 0
$$
 since the assumptions on the coefficients ensure that 
 $$ \suM   \alpha_{ij} \sgn( {\ue_j}(N,s)) \sgn^+( -{\ue_i}(N,s)) ds \leq 0\\ , \t{ for all } j\in\M  .$$  
 \end{proof}
\end{section}

\begin{section}
{\bf Convergence }
In this section we prove the vanishing viscosity approximation result.
The first subsection is devoted to the proof 
that, when  $\ep_n\to 0$, each  sequence $\{u^{\ep_n}\}$ of solutions to 
parabolic problems (\ref{para})-(\ref{uop}) has a subsequences converging 
to a solution $u$ of  (\ref{ipe})-(\ref{bci2}) satisfying (\ref{consflux}); this convergence result holds 
provided  $\{u_0^{\ep_n}\}$ 
approximates $u_0$ in the sense  of the  Lemma \ref{appr} below.
In Subsection 4.2 we prove that such limit function $u$
satisfies conditions 
(\ref{tci}), where  $\gamma_{ij}$ are  univokely determined by $\li$ and $K_{ij}$ ($i,j\in\M$), so that, thanks to uniqueness 
of solution to (\ref{ipe})-(\ref{bci2}), (\ref{tci}), each  sequence $\{u^{\ep_n}\}$  converges to $u$. 
 In particular, if the family  $\{K_{ij}\}$ verifies  (\ref{lc}), then   (\ref{coga2}) holds for the parameters $\gamma_{ij}$, i.e. the conditions satisfied  by the  limit function $u$ at the inner  node $N$ are   actually  transmission conditions.

\begin{subsection}{Vanishing viscosity limit}

Let us set 
\be
	\label {De}
      D_{\ep} :=\left \{ 
      \begin{array}{ll}
      w\in W^{2,1}(\A): \ \
       w_i(e_i)=B_i\ \t {if } i\in\mathcal I\ ,
     \\ \\
     \beta_{i}\left( \li  w_i(N) -\ep {w'_i}(N)\right)=\dsp  \sum_{j\in\M} \alpha_{ij} w_j(N) \ , \   
     i\in\M
     \end{array}
      \right\}\ ,
\ee
where $\beta_i$ is defined in (\ref{17}).

We will use the following lemma  whose quite technical proof is included in the Appendix.
\begin{lemma} \label{appr}
Let 
 $\{\ep_n\}_{n\in\N}$ be any decreasing sequence such that $\ep_n\to 0$.
 For every $v\in BV(\A)$ there exists a sequence
 $\{v_n\}_{n\in\N}$ converging to $v$ in $L^1(\A)$
 such that 
$v_n\in D_{\en}$ for each $n\in\N$, 
 $\Vert {v'_n} \Vert _{L^1(\A)}$ are  bounded by a quantity dependent on $\Vert v\Vert _{BV(\A)}$ and independent of $n$, and  $\ep_n \Vert v_n \Vert _{W^{2,1}(\A)} $ are bounded independently of $ n$.
\end{lemma}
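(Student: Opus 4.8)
The plan is to build $v_n$ in two stages: first a smooth ``flattened'' approximation $\hat v_n$ of $v$ that already carries the correct incoming boundary data and is locally constant near the inner node, and then a thin boundary-layer corrector $\psi_n$ supported near $N$ that forces the transmission conditions in the definition of $D_{\en}$ while barely disturbing the remaining quantities. Throughout I use that each $v_i\in BV(I_i)$ has one-sided traces with $|v_i(N)|\le C\Vert v\Vert_{BV(\A)}$, by the embedding $BV(I_i)\hookrightarrow L^\infty(I_i)$.

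\emph{First stage.} I would fix three scales with $\en\ll\zeta_n\ll\eta_n\to0$ (for instance $\eta_n=\en^{1/3}$, $\zeta_n=\en^{2/3}$) and define $\hat v_n$ by replacing $v_i$ with the trace value $v_i(N)$ on the layer of width $\eta_n$ adjacent to $N$, replacing $v_i$ with $B_i$ on the layer of width $\eta_n$ adjacent to $e_i$ when $i\in\mathcal I$, and then mollifying the result at scale $\zeta_n$ (gluing so that $\hat v_n$ stays exactly constant on the half-layer adjacent to $N$). The flatness near the endpoints yields $\hat v_n\in W^{2,1}(\A)$, $\hat v_{n,i}(e_i)=B_i$ for $i\in\mathcal I$, and $\hat v_{n,i}\equiv v_i(N)$ on an interval of length $\gg\en$ around $N$, so that $\hat v_{n,i}'(N)=\hat v_{n,i}''(N)=0$. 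Standard $BV$ estimates then give $\hat v_n\to v$ in $L^1(\A)$ (the error from the constant layers vanishes since the integral of the local oscillation over a layer of width $\eta_n$ tends to $0$), $\Vert\hat v_n'\Vert_{L^1(\A)}\le C\Vert v\Vert_{BV(\A)}$ up to the fixed data, and $\Vert\hat v_n''\Vert_{L^1(\A)}\le C\zeta_n^{-1}\Vert v\Vert_{BV(\A)}$, whence $\en\Vert\hat v_n''\Vert_{L^1(\A)}\le C(\en/\zeta_n)\Vert v\Vert_{BV(\A)}$ stays bounded.

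\emph{Second stage.} Set $R_i:=\li\,\hat v_{n,i}(N)-\beta_i\sum_{j\in\M}\alpha_{ij}\hat v_{n,j}(N)$, so that $|R_i|\le C\Vert v\Vert_{BV(\A)}$ uniformly in $n$. Fixing a profile $\sigma\in\mathcal C^\infty([0,+\infty))$ with $\sigma(0)=0$, $\sigma'(0)=1$ and $\sigma\equiv0$ on $[1,+\infty)$, I put $\psi_{n,i}(x)=-R_i\,\sigma((L_i-x)/\en)$ on incoming arcs and $\psi_{n,i}(x)=R_i\,\sigma(x/\en)$ on outgoing arcs, so $\psi_{n,i}$ is supported in a layer of width $\en$ at $N$, with $\psi_{n,i}(N)=0$ and $\en\,\psi_{n,i}'(N)=R_i$. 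Then $v_n:=\hat v_n+\psi_n$ keeps the node values $v_{n,i}(N)=\hat v_{n,i}(N)$ and has node slope $v_{n,i}'(N)=R_i/\en$, so that $\li v_{n,i}(N)-\en\,v_{n,i}'(N)=\beta_i\sum_{j\in\M}\alpha_{ij}v_{n,j}(N)$ and the transmission conditions hold exactly; since $\psi_n$ vanishes near the outer nodes, $v_n\in D_{\en}$. Crucially the node values entering $R_i$ are the traces of $v$, unaffected by the correctors, so the $m$ profiles decouple and can be chosen independently.

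\emph{Closing estimates and the obstacle.} A direct computation on the support gives $\Vert\psi_{n,i}\Vert_{L^1}=O(\en|R_i|)$, $\Vert\psi_{n,i}'\Vert_{L^1}=O(|R_i|)$ and $\Vert\psi_{n,i}''\Vert_{L^1}=O(|R_i|/\en)$; hence $v_n\to v$ in $L^1(\A)$, $\Vert v_n'\Vert_{L^1(\A)}\le C\Vert v\Vert_{BV(\A)}$, and $\en\Vert v_n\Vert_{W^{2,1}(\A)}\le \en(\Vert v_n\Vert_{L^1}+\Vert v_n'\Vert_{L^1})+\en\Vert v_n''\Vert_{L^1}$ stays bounded by $C\Vert v\Vert_{BV(\A)}$. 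The main difficulty is the simultaneous balancing of the three length scales: the transmission relation forces a node slope of order $\en^{-1}$, which must be confined to a layer of width $\sim\en$ so that the first-derivative norm stays $O(1)$ and $\en$ times the second-derivative norm stays $O(1)$; at the same time the smoothing of the merely-$BV$ datum must occur on a coarser scale $\zeta_n$ (so the corrector sits inside the flat region) yet fine enough that $\en/\zeta_n\to0$ and $L^1$ convergence holds. Making these choices compatible while keeping the traces that define $R_i$ controlled by $\Vert v\Vert_{BV(\A)}$ is exactly the technical heart that makes the detailed proof laborious.
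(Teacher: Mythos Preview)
Your argument is correct. The two-stage ``flatten--mollify, then correct'' construction does everything the lemma asks: the flattening makes $\hat v_{n,i}$ exactly constant on a neighbourhood of $N$ of width $\gg\en$, so the corrector $\psi_n$ lives inside that flat zone, the node values are frozen at the traces $v_i(N)$, and the transmission residues $R_i$ are controlled by $\Vert v\Vert_{BV(\A)}$ uniformly in $n$. The scale balance $\en\ll\zeta_n\ll\eta_n$ is exactly what is needed, and your closing estimates for $\psi_n$ are the standard boundary-layer computation.

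The paper takes a related but organisationally different route. It starts from a generic smooth approximation $w_n$ of $v$ (with $\en\Vert w_n\Vert_{W^{2,1}}$ bounded but no flatness at the endpoints) and then \emph{replaces} $w_n$ on a layer of width $\delta_n=\en^\theta$, $\theta>1$, near $N$ by an explicit cubic polynomial $p_{n,i}$ whose four coefficients are fixed by: value $w_{n,i}(N)$ and slope dictated by the transmission relation at $N$, and $C^1$-matching with $w_n$ at the far edge of the layer; near $e_i$ (incoming arcs) it uses a quadratic $r_{n,i}$ on a layer of width $\en$ to hit $B_i$. Thus the paper works with \emph{one} approximation modified by polynomial gluing, whereas you separate the smoothing stage from the constraint-fitting stage via an additive corrector. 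Your decoupling is arguably cleaner (the $m$ correctors are independent because $\psi_{n,i}(N)=0$), and it lets you use layer width $\en$ at the node rather than the paper's thinner $\en^\theta$; the paper's version is in turn more explicit, the polynomial coefficients being written out and estimated term by term. Both hinge on the same mechanism you identify at the end: a node slope of order $\en^{-1}$ confined to a layer of width $O(\en)$ keeps $\Vert v_n'\Vert_{L^1}$ bounded and $\en\Vert v_n''\Vert_{L^1}$ bounded.
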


Let  $u_0\in BV(\A)$; in this section, 
for each decreasing sequence  $\{\ep_n\}_{n\in\N}$  such that  $ \ep_n\to0$,  
$\{u_0^{\ep_n}\}_{n\in\N}$
denotes a sequence  
approximating  $u_0$ as in Lemma \ref{appr} .
 
 Let $ \{ u^{\ep_n}\}_{n\in\N}$ be the sequence of solutions to the following problems:
\begin{equation}
	\label {para2}\left\{ \begin{array}{ll}
	{{\u}_t^{\ep_n}}= -\li {\u}^{\ep_n}_x + \ep_n {\u}^{\ep_n}_{xx}\ , \qquad \qquad x\in I_i\ ,\ \ t\in[0,T]\ ,\qquad i\in\M ,
	\\ \\
	\u^{\ep_n}(x,0)={u_{0_i}^{\ep_n}}(x)
		\in W^{2,1}(\A)\  
		\ , \qquad   x\in I_i ,\ \qquad\qquad\qquad \ \ i\in\M ,
	\\ \\
		\beta_{i}  \left( -\li \u^{\ep_n}(N, t) +\ep_n{ \u}_x^{\ep_n}(N, t)\right)
	= \dsp \sum_{j\in\M} K_{ij} (u^{\ep_n}_j(N,t)-\u^{\ep_n}(N,t))\ , \quad t\in[0,T]\ ,\\ \\
	\u^{\ep_n}(0,t)=B_i
		\ ,\quad  i\in\mathcal I\ , \qquad\ 
	\u^{\ep_n}(L_i,t)={u_{0_i}^{\ep_n}}(L_i)\ ,\quad  i\in\mathcal O\ , 
		 \qquad\quad t\in[0,T]\ ,
	\end{array} \right.
\end{equation}
where $K_{ij}$ satisfy (\ref{17})  and $u_{0}^{\ep_n}\in D_{\ep_n}$.

The results in the previous section imply the following proposition.
\begin{prop}
Let (\ref{anz}) hold,
let 
$u_0\in BV(\A)$ 
and $B_i \in\R$
for $i\in\mathcal I$;
for all $T>0$ 
$$ \sup\{\Vert u^{\ep_n}\Vert_{BV(\A\times (0,T))}\}_{n\in\N}
< +\infty\ .$$

\end{prop}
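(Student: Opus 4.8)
The plan is to derive the bound directly from the three a priori estimates of this section, checking that, once $\{u_0^{\en}\}$ is the approximating sequence furnished by Lemma \ref{appr}, every right-hand side is bounded independently of $n$. By Theorem \ref{esistenzapara} each component $u_i^{\en}$ lies in $\mathcal C([0,T];W^{2,1}(I_i))\cap\mathcal C^1([0,T];L^1(I_i))$, hence in $W^{1,1}(I_i\times(0,T))$, so that its total variation over $I_i\times(0,T)$ is bounded by $\int_0^T\int_{I_i}\left(|{u_i^{\en}}_x|+|{u_i^{\en}}_t|\right)dx\,dt$. Recalling that $\Vert u^{\en}\Vert_{BV(\A\times(0,T))}=\suM\Vert u_i^{\en}\Vert_{BV(I_i\times(0,T))}$, it therefore suffices to bound, uniformly in $n$, the three quantities $\int_0^T\Vert u^{\en}(t)\Vert_{L^1(\A)}\,dt$, $\int_0^T\Vert u^{\en}_t(t)\Vert_{L^1(\A)}\,dt$ and $\int_0^T\suM\int_{I_i}|{u_i^{\en}}_x(t)|\,dx\,dt$.

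First I would bound the time derivative. Setting $\lambda_{\max}=\max_{i\in\M}\li$, Proposition \ref{stima3} together with the triangle inequality gives
$$\Vert u^{\en}_t(t)\Vert_{L^1(\A)}\leq \lambda_{\max}\,\Vert (u_0^{\en})'\Vert_{L^1(\A)}+\en\,\Vert u_0^{\en}\Vert_{W^{2,1}(\A)},$$
and both terms on the right are bounded independently of $n$ by the two extra properties in Lemma \ref{appr} (the uniform $L^1$ bound on the first derivative, and the uniform bound on $\en\Vert u_0^{\en}\Vert_{W^{2,1}(\A)}$). This bounds $\sup_{t\in[0,T]}\Vert u^{\en}_t(t)\Vert_{L^1(\A)}$, and integration in $t$ takes care of the second of the three quantities above.

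Next, Proposition \ref{stima1} yields $\Vert u^{\en}(t)\Vert_{L^1(\A)}\leq \Vert u^{\en}_0\Vert_{L^1(\A)}+\eta_1 t+\eta_2$: here $\Vert u^{\en}_0\Vert_{L^1(\A)}$ is bounded because $u_0^{\en}\to u_0$ in $L^1(\A)$, while $\eta_1,\eta_2$ come from the auxiliary lifting $q$, whose outer values coincide with those of $u_0^{\en}$. Since the uniform bound on $\Vert (u_0^{\en})'\Vert_{L^1(\A)}$ and the one-dimensional embedding $W^{1,1}(I_i)\hookrightarrow L^\infty(I_i)$ give uniformly bounded outer traces, $q$ can be chosen with $\Vert q\Vert_{L^1(\A)}$, $\Vert q'\Vert_{L^1(\A)}$ bounded and $\en\Vert q''\Vert_{L^1(\A)}\to 0$, so that $\eta_1,\eta_2$ are uniform; this bounds the first quantity. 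Inserting this bound, the time-derivative bound, and the Lemma \ref{appr} bound on $\suMj\Vert u^{\en}_{0j}\Vert_{W^{1,1}(I_j)}$ into the estimate of Proposition \ref{ux2} then controls $\sup_{t\in[0,T]}\suM\int_{I_i}|{u_i^{\en}}_x(t)|\,dx$ uniformly in $n$, and integrating in $t$ disposes of the third quantity.

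The main obstacle is not any single estimate but the uniformity of the constants, and specifically the verification that the lifting $q$ entering Proposition \ref{stima1} can be chosen with norms controlled independently of $n$; this is exactly the role of the two auxiliary properties in Lemma \ref{appr}. Once the three quantities are bounded uniformly, summing over $i\in\M$ and adding the $L^1$ contribution gives $\sup_n\Vert u^{\en}\Vert_{BV(\A\times(0,T))}<+\infty$, as claimed.
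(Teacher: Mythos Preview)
Your argument is correct and follows the same route as the paper: combine Proposition~\ref{stima1}, Proposition~\ref{stima3}, and Proposition~\ref{ux2} with the uniform bounds on $u_0^{\en}$ supplied by Lemma~\ref{appr} to control $\Vert u^{\en}(t)\Vert_{L^1}$, $\Vert u^{\en}_t(t)\Vert_{L^1}$, and $\Vert u^{\en}_x(t)\Vert_{L^1}$ uniformly in $n$. Your extra care in checking that the lifting $q$ (hence $\eta_1,\eta_2$) can be taken uniform in $n$---via the uniform $W^{1,1}$ bound on $u_0^{\en}$ and the embedding $W^{1,1}\hookrightarrow L^\infty$---makes explicit a point the paper leaves implicit, but the overall strategy is the same.
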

\begin{proof} Let $C_1\geq \ep_n \Vert u^{\ep_n}_0 \Vert _{W^{2,1}(\A)} $ for all $\ep_n$.
Lemma \ref{appr} and Proposition \ref{stima1} imply 
$$\Vert u^{\ep_n}(t)\Vert_{L^1(\A)} \leq C_2$$
for $t\in[0,T]$, where $C_2$ depends on $T$ and on $\Vert u_0\Vert_{L^1(\A)}$;
Lemma \ref{appr} and Proposition \ref{stima3} imply 
$$\Vert u^{\ep_n}_t(t)\Vert_{L^1(\A)} \leq C_3$$
for $t\in[0,T]$, where $C_3$ depends on $ \Vert u_0\Vert_{BV(\A)}$ and on the quantity $C_1$;
finally, Propositions \ref{ux2}, \ref{stima1} and \ref{stima3}
imply 
$$\Vert u^{\ep_n}_x(t)\Vert_{L^1(\A)} \leq C_4$$
for $t\in[0,T]$, where $C_4$ depends on $T,  \Vert u_0\Vert_{BV(\A)}$, the quantiy $C_1$,
$\lambda_\iota$, $K_{\iota k}$ ($\iota,k\in\M$) and Sobolev constants.
The constants $C_i$, $i=2,3,4$ are independent from $\ep_n$, so the claim is proved.
\end{proof}

The first step in proving the   convergence result  is the argument of the next proposition.

\begin{prop}\label{propovisco}
$\ $Let (\ref{anz}) hold, let 
$u_0\in BV(\A)$,
 $B_i \in\R$
for  $i\in\mathcal I$. For all $T>0$ and for each decreasing sequence   $\{\ep_n\}_{n\in\N}$  such that 
 $ \ep_n\to0$, the  sequence $\{ u^{\ep_n}\}_{n\in\N}$  of solutions to (\ref{para2}) admits a subsequence 
converging in
$ L^1((\A\times (0,T))$ to a 
   solution  $u$ of (\ref{ipe})-(\ref{consflux}).   For all $i\in\M$,
   the corresponding  subsequence of traces 
   $\u^{\ep_{n_k}}(N, t)$ 
   converges weakly$\dsp ^*$ in $L^\infty(0,T)$ to some  functions $\mathcal W^{N}_i (t)$; besides, for all $i\in\mathcal O$,  the limit  function $u$ 
   satisfies 
   \be\label{bt}u_i(0,t)=\mathcal W^{N}_i (t) \ , \  t>0\ \ .  \ee
   \end{prop}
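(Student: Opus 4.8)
The plan is to combine the uniform $BV$ bounds from the previous proposition with a compactness argument to extract a convergent subsequence, then pass to the limit in the weak formulation of the equation to identify the limit as a solution of the transport problem. First I would invoke the uniform bound $\sup_n \Vert u^{\ep_n}\Vert_{BV(\A\times(0,T))}<+\infty$ established just above. Since $BV(\A\times(0,T))$ embeds compactly into $L^1(\A\times(0,T))$ (applying the Rellich-type compactness for $BV$ on each arc $I_i\times(0,T)$), there exists a subsequence $\{u^{\ep_{n_k}}\}$ converging strongly in $L^1(\A\times(0,T))$ to some $u\in BV(\A\times(0,T))$. Simultaneously, the traces $u^{\ep_{n_k}}_i(N,\cdot)$ are bounded in $L^\infty(0,T)$ thanks to the $L^\infty$-bounds \eqref{linO} and \eqref{linOO} obtained inside the proof of Proposition \ref{ux2}; by weak-$*$ compactness of bounded sets in $L^\infty(0,T)$, after passing to a further subsequence, $u^{\ep_{n_k}}_i(N,\cdot)\rightharpoonup \w_i$ weakly-$*$ in $L^\infty(0,T)$ for each $i\in\M$.

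Next I would pass to the limit in the weak (distributional) formulation of \eqref{para2}. Fixing a test function $\phi_i\in\mathcal C^1_0([0,L_i)\times[0,T))$ and integrating the parabolic equation against $\phi_i$ by parts, the viscous term contributes a factor $\ep_n$ multiplying terms that are controlled uniformly (using the $\ep_n\Vert u^{\ep_n}_0\Vert_{W^{2,1}(\A)}$ bound from Lemma \ref{appr} together with the gradient estimate of Proposition \ref{ux2}), and therefore vanishes as $\en\to 0$. The transport terms pass to the limit by the strong $L^1$ convergence, and the initial-data term converges because $u^{\ep_n}_0\to u_0$ in $L^1(\A)$. For the incoming arcs $i\in\mathcal I$ the boundary integral reproduces the prescribed datum $B_i$; for the outgoing arcs $i\in\mathcal O$ the boundary term at $x=0$ involves the trace $u^{\ep_{n_k}}_i(0,t)=u^{\ep_{n_k}}_i(N,t)$, which converges weakly-$*$ to $\w_i$. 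Collecting these limits yields precisely the weak formulation of Definition \ref{sol} with $u_i(0,t)$ on the outgoing arcs replaced by $\w_i(t)$, so $u$ solves \eqref{ipe}-\eqref{bci2}; the conservation of flux \eqref{consflux} then follows by summing the transmission conditions in \eqref{para2} (recall $\sum_i\alpha_{ij}=0$) and passing to the limit.

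The delicate point, and what I expect to be the main obstacle, is establishing the trace identification \eqref{bt}, namely $u_i(0,t)=\w_i(t)$ for $i\in\mathcal O$. The issue is that strong $L^1$ interior convergence does not by itself guarantee convergence of the boundary traces, and a priori $\w_i$ (the weak-$*$ limit of the traces) need not coincide with the trace of the $L^1$-limit $u$. To close this gap I would exploit the uniform $BV$ bound once more: since $u\in BV(\A\times(0,T))$, its one-sided trace $u_i(0,\cdot)$ is well defined in $L^1(0,T)$, and the trace operator is continuous with respect to strict convergence in $BV$. Using the uniform control $\sup_t\Vert u^{\ep_n}_x(t)\Vert_{L^1(I_i)}\le C_4$ from Proposition \ref{ux2}, one shows that for each fixed small $h>0$ the averaged quantity $\frac1h\int_0^h u^{\ep_{n_k}}_i(x,t)\,dx$ differs from the trace $u^{\ep_{n_k}}_i(0,t)$ by at most $C_4 h$ uniformly in $k$; passing to the limit $k\to\infty$ (strong $L^1$ on the left, weak-$*$ on the right) and then letting $h\to0$ forces $\frac1h\int_0^h u_i(x,t)\,dx\to\w_i(t)$ in the appropriate sense, while the same averages converge to the intrinsic trace $u_i(0,t)$ by the $BV$ trace theorem. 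Matching the two limits gives $u_i(0,t)=\w_i(t)$, which is \eqref{bt}.
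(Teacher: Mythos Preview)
Your overall architecture is right---uniform $BV$ bound, compactness, weak-$*$ convergence of traces, passage to the limit in the weak formulation---and this matches the paper. The genuine gap is in your proposed resolution of the ``delicate point'', the trace identification $u_i(0,t)=\mathcal W^N_i(t)$ for $i\in\mathcal O$.

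Your claimed inequality is wrong. From $\sup_t\Vert u^{\ep_{n_k}}_{i,x}(\cdot,t)\Vert_{L^1(I_i)}\le C_4$ one gets
\[
\left|\frac1h\int_0^h u^{\ep_{n_k}}_i(x,t)\,dx - u^{\ep_{n_k}}_i(0,t)\right|
\;\le\;\int_0^h\bigl|u^{\ep_{n_k}}_{i,x}(y,t)\bigr|\,dy,
\]
and the right-hand side is bounded by $C_4$, not by $C_4h$. A bound $C_4h$ would require $L^\infty$ control of $u^{\ep_{n_k}}_{i,x}$, which you do not have. For the argument to work you would need $\sup_k\int_0^h|u^{\ep_{n_k}}_{i,x}(y,t)|\,dy\to0$ as $h\to0$, i.e.\ equi-integrability near the boundary, and nothing in the a priori estimates gives this. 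The standard viscous boundary-layer example $v_\ep(x)=e^{-x/\ep}$ on $[0,1]$ shows exactly why a uniform $BV$ bound is insufficient: $\Vert v_\ep'\Vert_{L^1}=1$, $v_\ep\to0$ in $L^1$, yet $v_\ep(0)=1$ for all $\ep$, so the trace of the limit differs from the limit of the traces. Your averaging argument cannot rule this phenomenon out.

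The paper closes this gap by the entropy method of Bardos--Leroux--N\'ed\'elec and Serre: one derives a one-sided inequality for the quadratic entropy $E(\xi)=\xi^2$ in the viscous problem, passes to the limit, and localises near the boundary with a cut-off $\zeta(d(x,\partial I_i)/a)$; letting $a\to0$ produces the inequality $u_i^2(0,t)-w_i^2(0,t)\le 2w_i(0,t)\bigl(u_i(0,t)-w_i(0,t)\bigr)$ for any $W^{1,1}$ function $w_i$ with prescribed trace, which forces equality and hence $u_i(0,t)=\mathcal W^N_i(t)$ for $i\in\mathcal O$ (and simultaneously $u_i(0,t)=B_i$ for $i\in\mathcal I$). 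This is precisely the device that detects the absence of a boundary layer where characteristics are incoming, and it is what your averaging argument is missing. A related minor point: your claim that ``for the incoming arcs the boundary integral reproduces the prescribed datum $B_i$'' also glosses over the term $\ep_{n_k}u^{\ep_{n_k}}_{i,x}(0,t)$ at the outer node, which is not obviously controlled; the paper avoids this by taking test functions vanishing at $e_i$ and recovering $u_i(0,t)=B_i$ through the same entropy argument.
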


\begin{proof}

Thanks to the previous proposition, fixed $T>0$, for every  sequence $\{ u^{\ep_n}\}
$
($\dsp \ep_n\to 0$)  
there exist a subsequence $\{ u^{\ep_{n_k}}\}$ converging in
$ L^1(\A\times (0,T))$ to a
function $u\in BV(\A\times (0,T))$, for $n\to +\infty$.
It is possible   to consider its  traces 
$u(x,0)\in L^\infty(\A)$ and  $\u(0,t), \u(L_i,t) \in L^\infty (0,T)$ ($i\in\M$), and we know that
$$\dsp \lim_{ t\to 0} \Vert u(\cdot,t) -u(\cdot,0)\Vert_{L^1(\A)} = 0\ , $$
$$\dsp   \lim_{ x\to 0} \Vert \u(x,\cdot) -\u(0,\cdot)\Vert_{L^1(0,T)} \ , 
\lim_{ x\to L_i}   \Vert \u(x,\cdot) -\u(L_i,\cdot)\Vert_{L^1(0,T)} =0\  \ ,\ i\in\M,
$$
see \cite{BLN}.

We are going to identify the function $u$.
First we have,
for $i\in\mathcal I$, 
$$\begin{array}{ll} \dsp \int_0^T \iIi \left  [\u^{\ep_{n_k}} {\phi_i}_t + \li \u^{\ep_{n_k}} {\phi_i}_x - \ep_{n_k} {{\u}_x^{\ep_{n_k}}}{ \phi_i}_x  \right](x,t) \,dx dt  
+\iIi  {\u}^{\ep_{n_k}}_0(x){\phi_i}(x,0) dx\\ \\
 \dsp=\int_0^T  \phi_i (L_i,t) \suMj \alpha_{ij} u^{\ep_{n_k}}_j (N,t) dt\ ,\  \qquad \qquad \forall  \phi_i\in  \mathcal C^\infty_0((0,L_i]\times [0,T))
\ ,
\end{array}$$
and in similar way, for $i\in\mathcal O$, 
$$\begin{array}{ll} \dsp \int_0^T \iIi \left  [\u^{\ep_{n_k}} {\phi_i}_t + \li \u^{\ep_{n_k}} {\phi_i}_x - \ep_{n_k} {{\u}_x^{\ep_{n_k}}} {\phi_i}_x  \right](x,t)\, dx dt  
+\iIi  {\u}^{\ep_{n_k}}_0(x) {\phi_i}(x,0) dx\\ \\
\dsp=\int_0^T \phi_i (0,t) \suMj \alpha_{ij} u^{\ep_{n_k}}_j(N,t)
dt\ ,    \qquad\qquad   \forall \phi_i\in  \mathcal C^\infty_0([0,L_i)\times [0,T))
\ ;
\end{array}$$
here we have used the expression (\ref{cftc}) for the transmission conditions. 

First, we recall that, thanks to  Propositions   \ref{stima1},  \ref{stima3} and  \ref{ux2}, for $i\in\M$, 
$$ \dsp \int_0^T\iIi  \ep_{n_k} {{\u}_x^{\ep_{n_k}}} {\phi_i}_x dx dt \rightarrow_{k\to +\infty}  0
\ .$$
Then
we notice that,  since $u^{\ep_{n_k}}\in \mathcal C([0,+\infty);W^{2,1}(\A))
$,  thanks to Propositions \ref{stima1}$-$\ref{ux2}, 
 the sequence of the traces
 $\u^{\ep_{n_k}}(N, t)$  is  uniformly bounded in 
 $L^\infty(0,T)$, so that we can consider   a subsequence 
   converging weak$\dsp ^*$ in $L^\infty(0,T)$ to
a certain   function $\mathcal W^{N}_i (t),$ for each $i\in\mathcal M$.

 So, we can consider a subsequence of $\{\ep_{n_k}\}$, still denoted by $\{\ep_{n_k}\}$, and letting $\ep_{n_k}$ go to zero in the above equalities,
 using the dominated convergence theorem and Lemma \ref{appr}, we obtain

 \be\label{bordoep1}
 	\begin{array}{ll} \dsp 
	\int_0^T \iIi \left  [\u {\phi_i}_t + \li \u {\phi_i}_x   \right](x,t) \,dx dt  
	+\iIi  {\u}_0(x){ \phi_i}(x,0) (x)dx\\ \\
	\dsp=\int_0^T \phi_i (L_i,t) \suMj \alpha_{ij} \mathcal W^N_j(t)
		dt  \ , \qquad \forall  \phi_i\in  \mathcal C^\infty_0((0,L_i]\times [0,T))\ \ i\in\mathcal I
	\ ,
	\end{array}
 \ee
  \be
  	\label{bordoep2}
	 \begin{array}{ll} \dsp \int_0^T \iIi \left  [\u {\phi_i}_t + \li \u {\phi_i}_x   \right](x,t) dx dt  
	+\iIi  {\u}_0(x) {\phi_i}(x,0) dx \\ \\=
	\dsp\int_0^T \phi_i(0,t) \suMj \alpha_{ij} \mathcal W^N_j(t) dt  
		\ ,\qquad  \forall  \phi_i\in  \mathcal C^\infty_0([0,L_i) \times [0,T))\ ,\ \ i\in\mathcal O
	\ .
	\end{array}
 \ee
 
 Using test functions vanishing in $\{N\}\times [0,T)$, it immediately follows that  $\u$    is a weak solution to the conservation law (\ref{ipe}) and satisfies in weak sense the initial condition.

 In order to prove that $u$ satisfies the boundary  conditions  (\ref{consflux}) and (\ref{bt}), we 
are going to  follow the \cite{BLN, Ser}, where, however, only boundary conditions were considered. Here we adapt their arguments to transmission conditions.

Let 
$\phi_i\in  \mathcal C^\infty_0(\overline I_i\times [0,T))$; 
we set
\be
	\label{-1}
	\mathcal I[\phi_i]:= 
	\int_0^T \iIi [\u\phi_{i_t} +\li\u\phi_{i_x}](x,t)dx + \iIi  {\u}_0(x) \phi_i(x,0) dx\ .
\ee
Moreover, let $\zeta \in  \mathcal C^1_0(\R)$ be a suitable function such that $\zeta(x)=1$ in a neighborhood of $x=0$; we set $\zeta^a_i(x):= \zeta\left(\frac {d( x, \partial I_i)}{a}\right)$;  if   the parameter  $a$ is positive and  small,
 $(1-\zeta^a)\phi_i\in  \mathcal C^1_0(I_i\times [0,T))$, then
\be 
	\label{0}
	\mathcal I[\phi_i]= \mathcal I[(1-\zeta^a_i)\phi_i]+ \mathcal I[\zeta^a_i \phi_i]=\mathcal I[\zeta^a_i \phi_i]
	= \li \int_0^T  \left(   {\u} \phi_i(L_i,t)-{\u} \phi_i(0,t)\right) dt\ ,
\ee
where the last equality follows letting $a$ go to zero and using the properties of the traces \cite{Ser}.

As a consequence of the above computations, by density argument,  for  all  test functions 
$\phi_i\in W^{1,1}_0( \R\times  (-\infty,T))$ the following formula holds, for $i\in\M$, 
\be
	\label{1}
	\begin{array}{ll} \dsp  \int_0^T \iIi [\u\phi_{i_t} +\li \u\phi_{i_x}](x,t) dx dt& +  \dsp \iIi  {\u}_0(x) \phi_i (x,0) dx 
	\\
	&=\dsp \li  \int_0^T  \left(   {\u} \phi_i(L_i,t)-{\u} \phi_i(0,t)\right) dt\ .
	\end{array}
\ee

First, using (\ref{1}) with  test  functions 
$\phi_i$
having 
 null trace in  $\{e_i\}\times (0,T)$  (we recall that $e_i$ is the  the boundary node of the arc $I_i$), and (\ref{bordoep1}), (\ref{bordoep2}), we obtain 
\be \label{tct} \begin{array}{ll}\dsp \li \u (L_i,t) =\suMj \alpha_{ij} \mathcal W^N_j(t)\ \ \t{ a.e. in } (0,T)\ ,\ \  i\in \mathcal I\\
\dsp -\li \u (0,t) =\suMj \alpha_{ij}  \mathcal W^N_j(t)\ \ \t{ a.e. in } (0,T)\ ,\ \  i\in \mathcal O\ ;\end{array}\ee
the previous equalities prove that $u$ satisfies the conservation of the fluxes (\ref{consflux}) at the inner node $N$\ ,
since $\dsp \suM \alpha_{ij}=0$.

Then we use the technique  in  \cite{Ser}, considering at the inner node the functions $\mathcal W^N_i(t)$, $i\in\mathcal O$,
in place of  prescribed boundary functions .
We consider  functions $w_i\in W^{1,1}(I_i\times (0,T))$, $i\in\mathcal I$, whose traces in $\{0\}\times (0,T)$
are 
$B_i$ 
and functions $w_i\in W^{1,1}(I_i\times (0,T))$, $i\in\mathcal O$, whose traces  in $\{0\}\times (0,T)$  are $\mathcal W_i^N(t)$.

Let  $E(\xi)=\xi^2$, then  the following inequality holds
$$ E(\u^{\ep_{n_k}})_t \leq \ep_{n_k} (E(\u^{\ep_{n_k}}))_{xx}- \li (E(\u^{\ep_{n_k}}))_x\ ,\ \ i\in\M\ ,$$
and considering positive test functions $\phi_i$ vanishing  in  $\{L_i\}\times (0,T)$, we obtain
$$
0\leq \int_0^T \iIi\left[E(\u^{\ep_{n_k}}) \left( {\phi_i}_t +  \li{\phi_i}_x\right) -\ep_{n_k} (E(\u^{\ep_{n_k}}) )_x {\phi_i}_x\right](x,t)dx dt$$
$$+
\iIi E({u_0}^{\ep_{n_k}}_i (x))\phi_i (x,0) dx
 +\int_0^T \left[ \left(\li E(\u^{\ep_{n_k}}) -{\ep_{n_k}} (E(\u^{\ep_{n_k}})) _x\right) {\phi_i}\right] (0,t)
dt
\ ,$$
for all $i\in\M$; moreover, using the equation satisfied by $\u^{\ep_n}$ and  test functions 
$\psi_i=\phi_i E'(w_i)$, we have
$$0= \int_0^T \iIi \left[\u^{\ep_{n_k}} \left( {\psi_i}_t +  \li{\psi_i}_x\right) -\ep_{n_k} {\u}^{\ep_{n_k}}_x {\psi_i}_x\right] (x,t)dx dt$$
$$+
\iIi {u_0}^{\ep_{n_k}}_i(x) \psi_i(x,0) dx
 +\int_0^T \left[\left(\li \u^{\ep_{n_k}} -  \ep_{n_k} {\u}^{\ep_{n_k}} _x\right) {\phi_i} E'(w_i)\right](0,t)
dt
\ .$$
Subtracting the last  relation from the previous one we obtain
$$\begin{array}{ll}
	\dsp 0\leq 
	\int_0^T \iIi\left[ E(\u^{\ep_{n_k}})  {\phi_i}_t- \u^{\ep_{n_k}}{\psi_i}_t
	+  \li  \left( E(\u^{\ep_{n_k}})  {\phi_i}_x- \u^{\ep_{n_k}}{\psi_i}_x\right)\right](x,t) dx dt
\\
\dsp -  \int_0^T \iIi\ep_{n_k} \left[ (E(\u^{\ep_{n_k}})) _x {\phi_i}_x - {\u}^{\ep_{n_k}}_x {\psi_i}_x \right](x,t) dx dt
      \\
\dsp+
\iIi  \left(E({u_0}^{\ep_{n_k}}_i(x) )-E'({w_i(x,0)}){\u}^{\ep_{n_k}}_0(x)\right)\phi_i(x,0) dx
\\ \dsp  +
\int_0^T \left[ \left( \li \left(E(\u^{\ep_{n_k}}) -\u^{\ep_{n_k}} E'(w_i)\right) 
-
\ep_{n_k}
 \left
(E'(\u^{\ep_n})   - E'(w_i) \right) {\u}^{\ep_{n_k}}_x \right)
{\phi_i}  \right](0,t) dt.
\end{array}$$
Now we let $k\to +\infty$ and using the dominated convergence theorem and taking into account that  
$ \Vert {u^{\ep_n}}(t)\Vert_{W^{1,1}(\A)} $ and 
$\ep_n \Vert {u^{\ep_n}}(t)\Vert_{W^{2,1}(\A)} $  are bounded in $[0,T]$, uniformly in $n$, we obtain 
$$\
	\begin{array}{ll}
	\dsp 0\leq 
	\int_0^T \iIi\left[ E(\u)  {\phi_i}_t- \u{\psi_i}_t
	+  \li  \left( E(\u)  {\phi_i}_x- \u{\psi_i}_x\right) \right](x,t)dx dt
      \\ 
	\dsp  +\iIi \left(E({u_0}_i (x))-E'({w_i(x,0)}){\u}_0(x)\right)\phi_i (x,0) dx
	\\
	+\li \dsp \int_0^T  \left[\left( E(w_i) - w_i E'(w_i)\right) {\phi_i} \right](0,t) dt \ ,
	\end{array}
$$
then  (\ref{1}) implies
$$
	0\leq 
	\int_0^T \iIi\left[ E(\u)  {\phi_i}_t
	+  \li   E(\u)  {\phi_i}_x
      	\right] (x,t) dx dt + \iIi E({u_0}_i(x) )\phi_i(x,0) dx
$$
$$ 
	+\li 
	\int_0^T 
	\left[ \left(E(w_i) + E'(w_i) (\u-w_i)\right) {\phi_i}\right] (0,t) dt\ ,\qquad \t{ for all } i\in\M \ .
$$

Now, we consider  positive functions $\eta_i\in  \mathcal C^1_0((-\infty,L_i)\times [0,T))$ and   the  functions
$\zeta_a$ previously introduced in this proof, with  $0\leq \zeta\leq 1$; 
in the above inequality  we choose $\phi_i=\phi_i^a:=\eta_i \zeta_i^a$ .

As $a$ goes to zero, as in \cite{Ser},
we obtain
$$
	0\leq  
		 \li \int_0^T\left[ \left(  E(w_i) -E(\u)
	+E'(w_i)   (\u-w_i)\right) {\eta_i} \right] (0,t) dt,  \qquad \t{ for all } i\in\M\ ;
$$
since the above inequality holds for all positive $\eta_i\in  \mathcal C^1_0((-\infty,L_i)\times [0,T))$, it follows that, for all $i\in\M$,
$$\u^2(0,t)-w_i^2(0,t)  \leq 2w_i(0,t)  (\u(0,t)-w_i  (0,t)) \  , \t{ a.e. } t\in (0,T)\ .$$
It is readily seen that 
 the above relation must be an equality 
and gives $u_i(0,t)=w_i(0,t)$ for a.e. $t$, i.e.
\be
	\label{ccbb}
	\u(0,t)=B_i\ , \ i\in\mathcal I\ , \\ \  \ \ \ \  \ \u(0,t)=\mathcal W^N_i(t)\ , \ i\in\mathcal O\ \ ,\qquad \t{ a.e. } \t{ in } (0,T)\ .
\ee
\end{proof}

We remark that the conditions (\ref{ccbb}) identify each limit  function $u_i$ as the unique solution 
to equation (\ref{ipe}) 
with boundary conditions in  $x=0$ given by (\ref{ccbb}) (see Definition \ref{sol} and (\ref{1}) with $ \phi_i(L_i,t)=0$) .
\end{subsection}
\begin{subsection}{Identification of the limit}

Now we are going to prove that all the sequences $\{u^{\ep_n}\}$ converge to the same   limit function, showing that 
the limit function $u$ in the  Proposition \ref{propovisco}  does not depend on the particular subsequence.

First we notice that the limit function  $u$ is univoquely determined  on the incoming arcs $ I_i$, $i\in\mathcal I$,  by the boundary and initial conditions  for these  arcs;
moreover, we recall that the limit function $u$ satisfies   the equalities (\ref{tct}), which, 
taking into account  the second equalities in  (\ref{ccbb}), 
can be written in the following way
\be
	\label{tcf}\begin{array}{ll} \dsp\sum_{j \in \mathcal I} \alpha_{ij} \mathcal W_j^N(t)
		+
	\sum_{j \in \mathcal O} \alpha_{ij} u_j(0,t)
		=\li \u(L_i,t) \ , \ \  \qquad \qquad \qquad   i\in\mathcal I\ ,\\
	\dsp \sum_{j \in \mathcal I} \alpha_{ij} \mathcal W_j^N(t)
		+( \alpha_{ii} +\li)  u_i(0,t)
		+
\sum_{j \in \mathcal O,j\neq i } \alpha_{ij} u_j(0,t)
=0
\ \ , \  i\in\mathcal O\ 
,\end{array}\ee
for  $a.e.\  t\in(0,T)$; using these equalities we are going to prove that the values $u_j(0,t)$, $j\in\mathcal O$, are determined by the values $u_j(L_j,t)$,  $j\in\mathcal I$, by (\ref{tci}) where the parameters $\gamma_{ij}$ depend only on $\alpha_{ij}$ 
(i.e. $K_{ij}$) and $\lambda_i$.

 Let   $Q$  be the $m\times m$ coefficients matrix 
of the linear system (\ref{tcf}) for the unknowns $\mathcal W_\iota^N(t), u_l(0,t)$, $\iota\in\mathcal I, l\in\mathcal O$; thanks to (\ref{17}) and (\ref{aij}) this matrix has some useful properties we are going to prove.

We assume that $\mathcal I=\{1,2,...,m_\mathcal I\}$ and $\mathcal O= \{m_\mathcal I+1,m_\mathcal I+2,...,m\}$ and we set
$m_\mathcal O= m-m_\mathcal I$.
The matrix $Q
$ has the form

 $$	 Q=\begin{pmatrix}
  	\alpha_{11} & 
		 .\ &. \ &
	 \alpha_{1{m_\mathcal I}} & 
	 \alpha_{1 {m_\mathcal I}+1} &
	   .\ &.\ &  
	   \alpha_{1m}
	       \\
         . \ & . \ & . \ & . \ & .\ &.\ &.\ &.
     \\  
          . \ & . \ & . \ & .\ & .\ &.\ &.\ &.
    \\
           \alpha_{m_\mathcal I 1 }&
		 .\ &. \ &
	 \alpha_{m_\mathcal I m_\mathcal I}  &
	  \alpha_{m_\mathcal I m_\mathcal I+1}   &
	  .\ &. \ &
	    \alpha_{m_\mathcal I m}
	    \\ 
\\  
          \alpha_{m_\mathcal I+1 1 } &
                 .\ &. \ &
          \alpha_{m_\mathcal I+1 m_\mathcal I }&
         \alpha_{m_\mathcal I+1  m_\mathcal I+1}+\lambda_{m_\mathcal I+1}  &
                  .\ &. \ &  
                    \alpha_{m_\mathcal I+1m}
             \\ 
         . \ & . \ & . \ & . \ & .\ &. \ &.\ & .
     \\  
          . \ & . \ & . \ & .\ & .\ &.\ &.\ & .
    \\
	\alpha_{m1 } &
	 .\ &. \ &
	 	 \alpha_{mm_\mathcal I}& 
	  \alpha_{mm_\mathcal I+1} &
	   .\ &. \ &
	 \alpha_{mm}+\lambda_{m}  
	   	 \end{pmatrix}  \ .
  $$
\medskip

First  we prove that  the assumptions on $K_{ij}$, $i,j\in\M$ (i.e. on $\alpha_{ij}$, see (\ref{aij})),  imply that  
the    matrix 
$Q$ is nonsingular.  
We need some  preliminary definitions and theorems, for whom we refer to \cite{BCM, House}. 

\begin{defi} Given a  matrix $P\in \C^{n\times n} $,  
$P=\{p_{ij}\}$,
and $n$ points $Y_i$ in the plane, 
the oriented graph associated to $P$ is the graph obtained joining the points $Y_i$ and $Y_j$ with an oriented arc  from 
$Y_i$ to $Y_j$, for all $i,j$ such that $p_{ij}\neq 0$.
\end{defi}

\begin{defi} An oriented graph is strongly connected if any two nodes are connected by an oriented walk ( i.e.
a sequence 
of oriented arcs $V_i$ and points $Y_i$, such that $V_i=(Y_i,Y_{i+1})$).
\end{defi}

We are going to deal with the class of  irreducible matrices. The definition of irreducible matrix can be found, for example, in \cite{BCM,House}; here, in order to apply the Theorem \ref{ger} below,  we are going to use the following characterization \cite{BCM,House}.
\begin{prop} $P\in \C^{n\times n} $ is an irreducible matrix if and only if its associated oriented graph  is strongly connected.
\end{prop}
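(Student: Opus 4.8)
The plan is to prove both implications by contraposition, translating the algebraic notion of reducibility into a reachability statement on the associated oriented graph. I would first recall the convenient combinatorial reformulation of reducibility: $P\in\C^{n\times n}$ is reducible if and only if there is a nonempty proper subset $S\subset\{1,\dots,n\}$ such that $p_{ij}=0$ whenever $i\in S$ and $j\notin S$; this is equivalent to the existence of a permutation bringing $P$ to block triangular form, the relabeling that lists the indices of $S$ first producing a vanishing off-diagonal block. Since there is an oriented arc $Y_i\to Y_j$ exactly when $p_{ij}\neq 0$, the condition on $S$ says precisely that \emph{no arc leaves $S$}.

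For the implication ``strongly connected $\Rightarrow$ irreducible'' I would argue the contrapositive. If $P$ is reducible, fix a set $S$ as above. Because no arc leaves $S$, an induction on the length of an oriented walk shows that no walk starting at a node of $S$ can ever reach a node outside $S$: the first arc of such a walk cannot cross out of $S$, and inductively every subsequent vertex stays in $S$. As both $S$ and its complement are nonempty, there are two nodes with no oriented walk joining them in that direction, so the graph fails to be strongly connected.

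For the converse ``irreducible $\Rightarrow$ strongly connected'' I would again use contraposition. Assume the graph is not strongly connected, so there exist nodes $Y_a$ and $Y_b$ with no oriented walk from $Y_a$ to $Y_b$. Let $S$ be the set consisting of $Y_a$ together with all nodes reachable from $Y_a$ by an oriented walk. Then $Y_a\in S$ while $Y_b\notin S$, so $S$ is nonempty and proper. If some arc went from a node $Y_i\in S$ to a node $Y_j\notin S$, then $Y_j$ would be reachable from $Y_a$, contradicting $Y_j\notin S$; hence $p_{ij}=0$ for all $i\in S$ and $j\notin S$, which is exactly the reducibility criterion above. Thus $P$ is reducible.

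The argument is essentially combinatorial bookkeeping, and the only point needing care is to keep the orientation convention consistent: one must match ``no arc leaves $S$'' with the correct zero block and choose the reachability set in the forward direction of the arcs, so that the two contrapositives use the very same characterization of reducibility. Once this convention is fixed in agreement with the rule $p_{ij}\neq 0\Leftrightarrow Y_i\to Y_j$, the two reductions fit together and there is no analytic difficulty.
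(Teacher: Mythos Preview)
Your argument is correct and is the standard combinatorial proof of this characterization. Note, however, that the paper does not give its own proof of this proposition: it is stated as a known fact with a citation to \cite{BCM,House}, and is used only to recognize that the matrix $Q$ is irreducible before applying the Gershgorin theorems. So there is nothing to compare on the level of strategy---you have supplied a full proof where the paper simply quotes the result. Your care in matching the orientation convention $p_{ij}\neq 0\Leftrightarrow Y_i\to Y_j$ with the correct vanishing block is exactly the point one must get right, and you have done so.
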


\begin{theo}\label{ger}{ (Gershgorin  theorems)} $  $
Let $P\in \C^{n\times n} $, $P=\{p_{ij}\}$,  and let 
$\dsp J^P_{ i}= \{z\in\C: \vert z-p_{ii}|\leq \sum_{ j=1, j\neq  i}^n |p_{ ij}|\}$; then
all the eigenvalues of $P$ belongs to the 
set $\dsp \cup_{i=1}^n J^P_i$. 
Moreover, 
if $P$ is  an irreducible  matrix and $\mu$   is an eigenvalue 
 lying on the boundary of each disk $J^P_i$ which contains it, then it lies on the boundaries of all disks $J^P_i$, $i=1,...,n$. 

\end{theo}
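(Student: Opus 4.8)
The plan is to establish the two assertions separately, both through the classical eigenvector-component argument. For the inclusion statement I would start from an eigenpair $(\mu,x)$ with $x=(x_1,\dots,x_n)\neq 0$ and pick an index $i$ at which $|x_i|=\max_{j}|x_j|$, noting that $|x_i|>0$. Reading off the $i$-th component of $Px=\mu x$ gives $(\mu-p_{ii})x_i=\sum_{j\neq i}p_{ij}x_j$, so that
$$|\mu-p_{ii}|\,|x_i|\leq \sum_{j\neq i}|p_{ij}|\,|x_j|\leq |x_i|\sum_{j\neq i}|p_{ij}|;$$
dividing by $|x_i|>0$ places $\mu$ in $J^P_i$, and hence in $\cup_{i=1}^n J^P_i$. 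This part is routine: the only thing to check is that a maximizing index exists and has positive modulus, which is immediate because $x\neq 0$.

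For the irreducible statement I would again fix an eigenpair $(\mu,x)$, now normalised so that $\max_j|x_j|=1$, and introduce the nonempty set $S=\{i:|x_i|=1\}$. First, for $i\in S$ the same computation shows $\mu\in J^P_i$, and by hypothesis $\mu$ then lies on $\partial J^P_i$, that is $|\mu-p_{ii}|=\sum_{j\neq i}|p_{ij}|$. Next, this equality forces both inequalities in the displayed chain to become equalities, so $|x_j|=1$ for every $j$ with $p_{ij}\neq 0$; in the language of the associated oriented graph, every out-neighbour of a vertex of $S$ again belongs to $S$. I would then invoke irreducibility through the strong-connectivity characterisation recalled above: from any $i\in S$ one reaches an arbitrary index $k$ along a directed walk, and propagating the out-neighbour property step by step along this walk forces $k\in S$. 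Hence $S=\{1,\dots,n\}$, so $|x_j|=1$ for all $j$. Finally, with $S$ equal to the whole index set the first step applies to every $i$, so every disk contains $\mu$, and the hypothesis then yields $\mu\in\partial J^P_i$ for all $i$, which is the claim.

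The main obstacle---indeed the only step that is not purely mechanical---is passing from the local closure property of $S$ (closed under out-neighbours) to the global conclusion $S=\{1,\dots,n\}$. This is exactly where irreducibility is used, and I would make it rigorous by induction on the length of a strongly-connecting walk, relying on the characterisation of irreducible matrices by strong connectivity of the oriented graph stated just before the theorem. Everything else in the argument reduces to the triangle inequality and the eigenvalue equation, so no delicate estimates are involved.
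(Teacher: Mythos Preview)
Your proof is correct and follows the classical eigenvector-component argument (the Taussky refinement in the irreducible case). Note, however, that the paper does not supply its own proof of this theorem: it is quoted as a known result with reference to \cite{BCM, House}, so there is nothing to compare against. The argument you give is exactly the standard one found in those references, and every step---the maximal-component trick, the equality case forcing $|x_j|=1$ on out-neighbours, and the propagation along directed walks via strong connectivity---is sound.
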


Assume condition (\ref{anz}). We notice that we can consider the case when the parameters $K_{ij}$ (and, consequently, $\alpha_{ij}$) are such that the    matrix 
$Q$   is  irreducible.  If not, problem (\ref{para2}) can be splitted in two or more independent transmission problems, and for  each one of them the  corresponding coefficients matrix in system (\ref{tcf}) is  an irreducible matrix; so,   Theorem \ref{viscolim} below can be proven separately for each independent problem. 

\begin{lemma} Let (\ref{anz}) hold.
The matrix $Q$ is  nonsingular and $\det Q>0$.
\end{lemma}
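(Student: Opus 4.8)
The plan is to locate the spectrum of $Q$ by applying the Gershgorin Theorem \ref{ger} to the rows of $Q$, and then to exclude the eigenvalue $0$ using irreducibility together with the boundary part of that same theorem. First I would record the row data of $Q$: by (\ref{aij}) the off-diagonal entries in each row are $\alpha_{ij}=-K_{ij}\le 0$, so that the $i$-th Gershgorin radius is $\sum_{j\in\M,\,j\neq i}|\alpha_{ij}|=\sum_{j\neq i}K_{ij}=\alpha_{ii}$. Hence the Gershgorin disk $J_i^Q$ of row $i$ is centred at $\alpha_{ii}$ with radius $\alpha_{ii}$ when $i\in\mathcal I$, and centred at $\alpha_{ii}+\lambda_i$ with the same radius $\alpha_{ii}$ when $i\in\mathcal O$.

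From this description I would read off the geometry of the spectrum. For $i\in\mathcal I$, condition (\ref{anz}) gives $\alpha_{ii}>0$, so $J_i^Q$ is the disk tangent to the imaginary axis at the origin and contained in $\{\mathrm{Re}\,z\ge 0\}$; it meets the imaginary axis only at $z=0$, which lies on its boundary. For $i\in\mathcal O$ the leftmost point of $J_i^Q$ is $\lambda_i>0$, so $J_i^Q$ is contained in the open right half-plane and does not contain $0$. Consequently $\bigcup_i J_i^Q\subset\{\mathrm{Re}\,z\ge 0\}$ and this union meets the imaginary axis only at the origin; by Theorem \ref{ger} every eigenvalue of $Q$ has nonnegative real part, and the only eigenvalue that could have vanishing real part is $0$ itself.

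It remains to rule out $0$ as an eigenvalue, which is the crux of the argument. Here I would use that $Q$ may be assumed irreducible and invoke the boundary statement of Theorem \ref{ger}. The disks that contain $0$ are exactly those with $i\in\mathcal I$, and on each of them $0$ lies on the boundary; thus, were $0$ an eigenvalue, it would satisfy the hypothesis of the boundary statement and would therefore have to lie on the boundary of every disk $J_i^Q$, $i\in\M$. But for $i\in\mathcal O$ we have already shown $0\notin J_i^Q$, and since $\mathcal O\neq\emptyset$ this is a contradiction. Hence $0$ is not an eigenvalue, $Q$ is nonsingular, and in fact every eigenvalue has strictly positive real part. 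Finally, since $Q$ is real its nonreal eigenvalues occur in conjugate pairs, each pair contributing a factor $|\mu|^2>0$ to $\det Q$, while the real eigenvalues are positive; therefore $\det Q=\prod_k\mu_k>0$. The delicate step, and the one I expect to require care, is the verification that $0$ sits on the boundary of precisely the disks that contain it while lying strictly outside the $\mathcal O$-disks, since this is exactly what activates the boundary part of Gershgorin's theorem and produces the contradiction.
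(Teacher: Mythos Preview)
Your argument is correct and follows essentially the same route as the paper: Gershgorin disks to confine the spectrum to $\{\mathrm{Re}\,z\ge 0\}$, then the irreducibility/boundary part of Theorem \ref{ger} together with $\mathcal O\neq\emptyset$ to exclude $0$. The only minor difference is that the paper observes at the outset that $Q$ is symmetric (since $\alpha_{ij}=\alpha_{ji}$), so its eigenvalues are real and the Gershgorin inclusion immediately yields strictly positive eigenvalues; your conjugate-pairs argument for $\det Q>0$ is correct but unnecessary once symmetry is noted.
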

\begin{proof}
The matrix $Q$ is  simmetric  and 
$$0< \alpha_{ii}=\sum_{j\in\M, j\neq i} \vert \alpha_{ij}\vert\ \t{ if } i\in \mathcal I\ ,\qquad \qquad \ 
\alpha_{ii}+\li>\sum_{j\in\M, j\neq i} \vert \alpha_{ij}\vert\ , \t{ if } i\in \mathcal O\ ,$$ 
thanks to (\ref{aij}), since $\li>0$ for $i\in\M$;
these facts and Theorem \ref{ger} imply that $Q$ has real positive eigenvalues since $J^Q_i\subseteq \{\Re z\geq 0\}$ for $i\in\M$, and  none eigenvalue can be zero since the origin does not belong 
to the disks $J^Q_i$ for $i\in\mathcal O$.
\end{proof}

The matrix $Q$ is a $M-$matrix, according with the following definition  \cite{Pl}.

\begin{defi}
A matrix $P\in \R^{n\times n}$ which can be expressed in the form $P = \sigma I- \ov P$, where $\ov P = \{\ov p_{ij}\} $
with $\ov p_{ij} \geq 0$, 
$1 \leq i, j \leq  n$, and $\sigma \geq \rho(\ov P)$, the maximum of the moduli of the eigenvalues of $\ov P$, is called an $M-$matrix.
\end{defi}

It is easy to check that the matrix $Q$ verifies the above definition with the position  
 $\sigma=\max \{\alpha_{ii}, \alpha_{jj}+\lambda_j: i\in\mathcal I, j\in\mathcal O\}$.

Non singular $M-$matrices have several properties; in particular
all their principal minors are positive and their  inverse matrices have non negative elements \cite{Pl}.

\medskip

In the following lemma we prove  further properties for the elements of the matrix $Q^{-1}$.
 
\begin{lemma} Let (\ref{anz}) hold and let $Z=Q^{-1}=\{z_{ij}\}_{i,j\in\N}$. 
For all $i\in\M$, $z_{ii}>0$; for $i\neq j$, if $\alpha_{ij}< 0$ then $z_{ij}>0$  ($i,j\in\mathcal M$).
\end{lemma}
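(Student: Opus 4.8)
The plan is to avoid computing $Q^{-1}$ explicitly and instead extract the sign information directly from the identity $QZ=I$, using only the structure of $Q$ together with the fact already established in the excerpt that a nonsingular $M$-matrix has entrywise nonnegative inverse, i.e. $z_{ij}\ge 0$ for all $i,j$. Recall that $Q$ is symmetric with $Q_{ii}=\alpha_{ii}>0$ for $i\in\mathcal I$ (by condition (\ref{anz})), $Q_{ii}=\alpha_{ii}+\lambda_i>0$ for $i\in\mathcal O$ (since $\lambda_i>0$), and $Q_{ij}=\alpha_{ij}\le 0$ for $i\neq j$. Thus $Q_{ii}>0$ for every $i\in\M$ and all off-diagonal entries are nonpositive.

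First I would prove the diagonal claim $z_{ii}>0$. Writing the $(i,i)$ entry of $QZ=I$ gives $Q_{ii}z_{ii}+\sum_{l\neq i}Q_{il}z_{li}=1$. Since $Q_{il}\le 0$ and $z_{li}\ge 0$ for $l\neq i$, the sum is $\le 0$, whence $Q_{ii}z_{ii}\ge 1$; as $Q_{ii}>0$, this yields $z_{ii}\ge 1/Q_{ii}>0$.

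Next, for the off-diagonal claim, I would fix $i\neq j$ with $\alpha_{ij}<0$ and read the $(i,j)$ entry of $QZ=I$, which equals $0$: $Q_{ii}z_{ij}+Q_{ij}z_{jj}+\sum_{l\neq i,j}Q_{il}z_{lj}=0$. Rearranging, $Q_{ii}z_{ij}=-Q_{ij}z_{jj}-\sum_{l\neq i,j}Q_{il}z_{lj}$. The sum is $\ge 0$ because $-Q_{il}\ge 0$ and $z_{lj}\ge 0$; the term $-Q_{ij}z_{jj}$ is strictly positive because $-Q_{ij}=-\alpha_{ij}>0$ and $z_{jj}>0$ by the diagonal claim just proved. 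Hence $Q_{ii}z_{ij}>0$, and since $Q_{ii}>0$ we conclude $z_{ij}>0$.

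The argument is essentially bookkeeping of signs; the only point requiring care, and the one place where the two claims interact, is that the off-diagonal positivity genuinely relies on the strict positivity of $z_{jj}$, so the diagonal claim must be established first and then fed into the off-diagonal one. No appeal to irreducibility or to the Gershgorin/graph machinery is needed here: those were used only to obtain nonsingularity and $Z\ge 0$, which I take as given inputs.
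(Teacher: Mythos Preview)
Your argument is correct, and it is considerably shorter than the paper's proof. The paper does not exploit $QZ=I$ entrywise; instead it works with cofactors. Concretely, it introduces a family of bordered submatrices $H^{rc}_{nl}$ (a principal submatrix of $Q$ edged with the $r$-th row and $c$-th column) and proves by induction on their size that $\det H^{rc}_{nl}\le 0$, with strict inequality when $\alpha_{rc}<0$. After a bookkeeping of row/column exchanges, this pins down the sign of each cofactor $\det Q_{ij}$ in the adjugate formula, and one reads off $z_{ii}>0$ from positivity of principal minors and $z_{ij}=(\det Q)^{-1}|\det Q_{ij}|>0$ whenever $\alpha_{ij}<0$. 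Your approach bypasses all of this by feeding the already-stated facts $Z\ge 0$ and $Q_{ii}>0$, $Q_{il}\le 0$ for $l\neq i$ directly into the linear relations $\sum_l Q_{il}z_{lj}=\delta_{ij}$: the diagonal case gives $Q_{ii}z_{ii}\ge 1$, and then the off-diagonal case uses the strict positivity of $z_{jj}$ just obtained. The paper's cofactor computation gives a bit more (an explicit description of the sign of each adjugate minor), but for the statement actually needed your route is cleaner and does not require the inductive determinant analysis or any irreducibility/graph considerations.
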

\begin{proof}
Let consider  a submatrix
  of Q obtained by deleting a set of corresponding rows and columns 
  $$\begin{pmatrix}
  			\alpha_{k_1k_1}&
		 .&.&
		 \alpha_{k_1k_n}\ & \alpha_{k_1h_1}&
		   .&.&
		\alpha_{k_1h_l}\ 
	 \\ 
		.  \ & . \ & . \ & .\ &. \ & .\ & .& . \ &
	\\
		 . \ & . \ & . \ & .\ &. \ & .\ & .& . \ &
	\\
		\alpha_{k_nk_1 }\ & 
		.\ &.\ &
		 \alpha_{k_nk_n}&
		 \alpha_{k_nh_1}  &
		  .\ &.\ & 
		   \alpha_{k_n h_l}
	 \\  \\
		\alpha_{h_1k_1 }\ & 
		.\ &.\ &
		 \alpha_{h_1k_n}&
		 \alpha_{h_1h_1} +\lambda_{h_1} &
		  .\ &.\ & 
		   \alpha_{h_1 h_l}
	\\
	    	 . \ & . \ & . \ & .\ &.\ & .\ &.\ &.
	\\ 
	    	 . \ & . \ & . \ & .\ &.\ & .\ &.\ &.
	 \\  	 
	 	\alpha_{h_lk_1} \ & . \ & .\ &
	 	 \alpha_{h_lk_n} & 
	 	\alpha_{h_lh_1} &  
	 	 .\ &  .\ &
	 	\alpha_{h_lh_l}+\lambda_{h_l}\  
  	 \end{pmatrix}  \ ,
$$
where 
 \be
  \label{rckh1}\begin{array}{ll}  
   	0\leq n\leq m^\mathcal I\ , \qquad
	0\leq l\leq m^\mathcal O\ ,  
   \\ 
  	k_i\in \mathcal I\ \t{ for } i=1,2,...,n\ ,\qquad   h_i\in \mathcal O \t{ for } i=1,2,...,l\ ,
   \\
    	k_i\neq k_j\ \t{ for }  i,j=1,2,...,n\ ,\qquad  
   	h_i\neq h_j \t{  for } i,j=1,2,...,l \ ;
\end{array}\ee
  then we consider the  submatrix obtained edging the above one with   the  r$-th$ row and   the c$-th$ column of $Q$,
$$  
 	  H^{rc}_{nl}=\begin{pmatrix}
  	\alpha_{rc}\ & \alpha_{rk_1} &
	.&.&
	 \alpha_{rk_n} & \alpha_{rh_1} &  .&.&  \alpha_{rh_l} 
 	\\ \\
		\alpha_{k_1 c }\  & 
		 \alpha_{k_1k_1}&
		 .&.&
		 \alpha_{k_1k_n}\ & \alpha_{k_1h_1}&
		   .&.&
		\alpha_{k_1h_l}\ 
	 \\ 
		.\ & . \ & . \ & . \ & .\ &. \ & .\ & .& . \ &
	\\
	.	\ & . \ & . \ & . \ & .\ &. \ & .\ & .& . \ &
	\\
		\alpha_{k_nc}\ & 
		\alpha_{k_nk_1 }\ & 
		.\ &.\ &
		 \alpha_{k_nk_n}&
		 \alpha_{k_nh_1}  &
		  .\ &.\ & 
		   \alpha_{k_n h_l}
	 \\  \\
	 	\alpha_{h_1c}\ & 
		\alpha_{h_1k_1 }\ & 
		.\ &.\ &
		 \alpha_{h_1k_n}&
		 \alpha_{h_1h_1} +\lambda_{h_1} &
		  .\ &.\ & 
		   \alpha_{h_1 h_l}
	\\
	    	. \ & . \ & . \ & . \ & .\ &.\ & .\ &.\ &.
	\\ 
	    	. \ & . \ & . \ & . \ & .\ &.\ & .\ &.\ &.
	 \\  
		 \alpha_{h_lc}  & 
	 	. \ & . \ & .\ &
	 	 \alpha_{h_lk_n} & 
	 	\alpha_{h_lh_1} &  
	 	 .\ &  .\ &
	 	\alpha_{h_lh_l}+\lambda_{h_l}\  
  	 \end{pmatrix}  \ ,
  $$
  where 
  \be
  \label{rckh2}\begin{array}{ll}
  	r,c \in\M\ ,    \quad
  	   r\neq c\ ,\quad 
   	 	k_i,h_j \neq r,c \ \t{ for }i=1,2,...,n \ \t{ and } \ j=1,2,...,l.
\end{array}\ee
First  we are going to show  that all the $H^{rc}_{nl}-$type matrices  have non positive determinant, and 
that, if $\alpha_{rc}< 0$, then $\det H^{rc}_{nl}<0$.
 
   This fact is readily seen  
   for all $n,l$ such that  $n+l=0, 1,2$ (for any $r,c$ as above), using  (\ref{aij}) and the positivity of the principal minors of $Q$.

  In order to use  the principle of induction to prove that $\det H^{rc}_{nl} \leq 0$ 
  for all $l+n<m_{\mathcal I}+m_{\mathcal O}$ 
  and $r,c,k_i,h_j$ satisfying 
  (\ref{rckh1}), (\ref{rckh2}),
  we assume that 
  $\det H^{rc}_{nl} \leq 0$
   for all $n,l$ such that $n+l=\nu-1<m_{\mathcal I}+m_{\mathcal O}-2$  (and any $k_i,h_j,r,c$ as in  (\ref{rckh1}), (\ref{rckh2})), 
    and we compute $\det H^{rc}_{nl}$
  when  $n+l=\nu$  (and any $k_i,h_j,r,c$ as in  (\ref{rckh1}), (\ref{rckh2})):
  $$\det H^{rc}_{nl}=\alpha_{rc} \det  M_{rc} +
   \sum_{j=1}^{n} \alpha_{rk_j} (-1)^j(\det M_{rk_j})
    +
   \sum_{j=1}^{l} \alpha_{rh_j} (-1)^{n+j}(\det M_{rh_j})\ ,
$$
where $M_{rc}$  is the  matrix  obtained removing the first line and the first 
column in $H^{rc}_{nl}$, 
$M_{rk_j}$ is the one obtained removing the first line and the $j+1$-th
column  and $M_{rh_j}$ is the one obtained removing the first line and the $n+j+1$-th
column .
$M_{rk_1}$ is a $H^{k_1c}_{n-1l}$ matrix,  while, for all $j=2,...,n$,  $M_{rk_j}$ becomes a $H^{k_jc}_{n-1l}$ matrix 
provided $j-1$ exchanges of rows, and 
$M_{rh_j}$ becomes a $H^{h_jc}_{nl-1}$ matrix provided
 $n+j-1$ exchanges of rows. 
 
 Using the inductive assumption  we have
$$\det H^{rc}_{nl}=\alpha_{rc} \det  M_{rc}$$
$$ 
     -\sum_{j=1}^{n} \alpha_{rk_j} (-1)^j (-1)^{j-1}
        |\det M_{rk_j}|
    -\sum_{j=1}^{l} \alpha_{rh_j} (-1)^{n+j} (-1)^{n+j-1}
        |\det M_{rh_j}|\ 
$$
$$
	=\alpha_{rc} \det  M_{rc}
		 +\sum_{j=1}^{n} \alpha_{rk_j} 
       |\det M_{rk_j}|
    +\sum_{j=1}^{l} \alpha_{rh_j} 
        |\det M_{rh_j}|\ ;
$$
$\det M_{rc}>0$,  since it is a principal minor of $Q$,
so, thanks to (\ref{aij}), 
the  principle of induction  proves that $\det H^{rc}_{nl}\leq 0$
for all $r,c,n,l$ satisfying (\ref{rckh1}),(\ref{rckh2}). Finally, it is readily seen that $ \alpha_{rc}< 0$ implies 
$\det H^{rc}_{nl}<0 $.

Knowing the  sign of the  principal minors of  $Q$  and of the determinant of $H^{rc}_{nl}$ - type matrices  
 is the argument to prove the claim.

When $m_{\mathcal O}=m_{\mathcal I}=1$ it is readily seen that 
$ z_{ij}>0$ for $i,j=1,2$.
In general cases, let $Q_{ij}$ be the adjoint matrix to the element $q_{ij}$ (remember that $Q$ is simmetric):
\begin{itemize}
\item
{${z}_{ii}$:}
$${z}_{ii}= 
(\det Q)^{-1}\det Q_{ii}> 0\ ,$$
since the principal minors of $Q$ are positive;

\item{${z}_{ij} \ , i< j$:}
$${z}_{ij}= 
(\det Q)^{-1} (-1)^{i+j}\det Q_{ij}\ ,$$
where  :
\begin{itemize}
\item [-] $Q_{12}$ reveals to be  a $H^{21}_{(m_\mathcal I-2)m_\mathcal O}$ matrix  if $m_\mathcal I\geq 2$ 
 and a   $H^{21}_{0 (m_\mathcal O-1)}$ matrix  if $m_\mathcal I=1$ ; 
 $\det Q_{12}< 0$ if $\alpha_{12}<0$ ;
 
 \item [-]  provided $ j-2$ exchanges of rows,
$Q_{1j}$ becomes   a 
$H^{j1}_{nl}$ matrix, where $n+l=m_\mathcal I+m_\mathcal O -2$ ; $\det Q_{1j}\neq 0$ if $\alpha_{1j}<0$;
  \item [-] 
in general,  provided $ j-2$ exchanges of rows  and 
$i-1 $ exchanges of columns, $Q_{ij}$ becomes  a $H^{ji}_{nl}$ matrix, where 
$n+l=m_\mathcal I+m_\mathcal O-2$; $\det Q_{ij}\neq 0$ if $\alpha_{ij}<0$;
\end{itemize}
so 
$${ z}_{ij}= 
	(\det Q)^{-1} (-1)^{i+j}\det Q_{ij}
$$
$$
	=(\det Q)^{-1} (-1)^{i+j}  (-1)^{j-2+i-1}(-|\det Q_{ij}|)=
	(\det Q)^{-1} |\det Q_{ij}| \ 
$$
which implies $z_{ij}>0$ if $\alpha_{ij}<0$.

\medskip

\item{${ z}_{ij} \ , i> j$:}

the result follows by the simmetry of $Z$.

\end{itemize}

\end{proof}

In the following theorem we prove our main convergence  result .

\begin{theo} \label{viscolim}
Let (\ref{anz}) hold, let 
$u_0\in BV(\A)$
and $B_i \in\R$
for  $i\in\mathcal I$ .
There exist parameters $\gamma_{ij}$, satisfying (\ref{coga}), univokely determined by  $\li$ and $K_{ij}$ ($i,j\in\M$),
such that 
all the sequences $\{ u^{\ep_n}\}_{n\in\N}$ of solutions to problems (\ref{para2}) ($\ep_n \to 0$)
converge  in
$ L^1((\A\times (0,T))$ to 
 the  solution of (\ref{ipe})-(\ref{tci}), for all $T>0$.  
 If (\ref{lc}) holds, then the parameters $\gamma_{ij}$ satisfy (\ref{coga2}).
\end{theo}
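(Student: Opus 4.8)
The plan is to assemble the structural facts already established. Proposition~\ref{propovisco} furnishes, for each sequence $\ep_n\to0$, a subsequence whose limit $u$ solves (\ref{ipe})--(\ref{bci2}), satisfies the flux conservation (\ref{consflux}), and obeys the relations (\ref{tcf}) at the inner node, where $\w_j(t)$ denotes the weak$^*$ limit of the traces. The crux is to \emph{invert} the linear system (\ref{tcf}). Writing it as $Q\mathbf x=\mathbf b$ with unknown $\mathbf x=\big((\w_j)_{j\in\mathcal I},(u_j(0,\cdot))_{j\in\mathcal O}\big)$ and right-hand side $b_i=\li u_i(L_i,\cdot)$ for $i\in\mathcal I$, $b_i=0$ for $i\in\mathcal O$, the preceding lemmas guarantee that $Q$ is nonsingular, so $\mathbf x=Z\mathbf b$ with $Z=Q^{-1}=\{z_{ij}\}$. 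Reading off the components indexed by $i\in\mathcal O$ gives $u_i(0,t)=\sum_{j\in\mathcal I}z_{ij}\,\li u_j(L_j,t)$, which is exactly (\ref{tci}) once I set
\[
  \gamma_{ij}:=\li\,z_{ij}\,,\qquad i\in\mathcal O,\ j\in\mathcal I .
\]
Since $Z$ depends only on the $\alpha_{ij}$ (hence on the $K_{ij}$) and on the $\li$, these coefficients are fixed independently of the chosen subsequence.

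Next I would verify (\ref{coga}). Nonnegativity $\gamma_{ij}\ge0$ is immediate from $\li>0$ and the fact, recalled above, that the nonsingular $M$-matrix $Q$ has $Z=Q^{-1}$ with nonnegative entries. For the normalization $\sum_{i\in\mathcal O}\gamma_{ij}=1$ I would argue algebraically: since all column sums of $\{\alpha_{ij}\}$ vanish, the all-ones row vector satisfies $\mathbf 1^{\mathsf T}Q=\mathbf d^{\mathsf T}$, where $d_i=\li$ for $i\in\mathcal O$ and $d_i=0$ for $i\in\mathcal I$ (the only surviving contribution being the diagonal shift $\li$ carried by the outgoing indices). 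Multiplying on the right by $Z$ and using $QZ=I$ yields $\mathbf d^{\mathsf T}Z=\mathbf 1^{\mathsf T}$, that is $\sum_{i\in\mathcal O}\li z_{ij}=1$ for every $j$; restricting to $j\in\mathcal I$ is precisely $\sum_{i\in\mathcal O}\gamma_{ij}=1$.

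With (\ref{coga}) in hand, the limit $u$ solves (\ref{ipe})--(\ref{bci2}) together with the transmission conditions (\ref{tci}) for these fixed $\gamma_{ij}$. By the well-posedness stated in the Introduction (via Stampacchia's Lemma and the Green formula for $BV$ functions), such a problem has a \emph{unique} solution, so the limit cannot depend on the subsequence: every subsequence of $\{u^{\ep_n}\}$ admits a further subsequence converging in $L^1(\A\times(0,T))$ to this one solution, whence the whole sequence converges to it. Finally, under (\ref{lc}) each $i\in\mathcal O$ has some $j\in\mathcal I$ with $K_{ij}>0$, i.e. $\alpha_{ij}<0$; the last lemma then gives $z_{ij}>0$, so $\gamma_{ij}=\li z_{ij}>0$, which is (\ref{coga2}).

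I expect the only genuinely delicate point inside this proof to be the passage from subsequential to full-sequence convergence, which rests entirely on uniqueness for the limit problem; the identification $\gamma_{ij}=\li z_{ij}$ and the normalization are short, once the structural facts about $Q$ and $Q^{-1}$ (nonsingularity, $M$-matrix property, and the sign pattern of $Z$) proved in the preceding lemmas are available.
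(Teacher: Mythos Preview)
Your proposal is correct and follows essentially the same route as the paper: invert the system (\ref{tcf}) using the nonsingularity of $Q$, read off $\gamma_{ij}=\li z_{ij}$, invoke the $M$-matrix property for nonnegativity, use the sign lemma on $Z$ for (\ref{coga2}) under (\ref{lc}), and conclude full-sequence convergence from uniqueness of the limit problem. The one place where you are slightly more explicit than the paper is the normalization $\sum_{i\in\mathcal O}\gamma_{ij}=1$: the paper just says it follows from (\ref{tct}) and (\ref{ga}), whereas your column-sum identity $\mathbf 1^{\mathsf T}Q=\mathbf d^{\mathsf T}$ (hence $\mathbf d^{\mathsf T}Z=\mathbf 1^{\mathsf T}$) makes this a clean algebraic fact independent of the particular solution $u$.
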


\begin{proof}
Let $\ep_n\to 0$ and let $u$ be the limit function in Proposition \ref{propovisco}; on each arc $I_i$  incoming in the node  the  function $u_i$ is univokely 
determined by the initial data ${u_0}_i$ and the  boundary ones $B_i$, thanks to the first equalities (\ref{ccbb}) (see Section 1).

The matrix $Q$ of the system (\ref{tcf}) is nonsingular, then  $\mathcal W_i^N(t)$ and $u_j(0,t)$, ($i\in \mathcal I$,  
$j\in \mathcal  O$) are univokely determined by 
$\li \u(L_i,t)$, $i\in\mathcal I$;  in particular,
\be\label{ga}\lambda_j u_j(0,t) = \sum_{i\in\mathcal I} \lambda_jz_{ji} \lambda_i \u(L_i,t)\ , \ \ j\in\mathcal O\  ,\ee
where $Z=Q^{-1}=\{z_{ij}\}_{i,j\in\N}$;
so, 
on each outgoing arc $I_j$
the limit function $u_j$ is univokely 
determined by the initial datum ${u_0}_j$, by the initial data  ${u_0}_i$ and the  boundary ones $B_i$, for $i\in\mathcal I$, (see Section 1).

 This fact prove  that 
  the set of  possible limit functions for  subsequences $\{u^{\ep_{n_k}}\}$ contains only the solution to problem 
  (\ref{ipe})-(\ref{bci2}), satisfying (\ref{tci})  with  $\gamma_{ij}=  \lambda_iz_{ij}$ ($i\in\mathcal O$, $j\in\mathcal I$),
   in the sense of Definition \ref{sol}.
  
  The elements of $Z$ are non negative, since it is the inverse of a $M-$ matrix, 
    so $\gamma_{ij}\geq 0$; moreover,  equalities (\ref{tct}) and (\ref{ga}) easily imply that  $\dsp \sum_{i\in\mathcal O} \gamma_{ij}=1$.
    Finally, thanks to the previous lemma, if $\alpha_{ij}<0$ ($i\in \mathcal O$, $j\in \mathcal I$), then $z_{ji}>0$, so that condition
    (\ref{lc}) implies condition (\ref{coga2}).
 \end{proof}
 \end{subsection}
  
  \end{section}

\begin{section}{Approximation examples }

 In the previous section we proved that, if the  family of parameters $K_{ij}$ appearing  in the transmission conditions in (\ref{para2}) satisfies the constrains (\ref{17}),(\ref{anz}),(\ref{lc}), then,  
when $\ep_n \to 0$,  problems (\ref{para2})  approximate the first order transport problem (\ref{ipe})-(\ref{tci}) with appropriate transmission coefficients $\gamma_{ij}$ satisfying (\ref{coga}),(\ref{coga2}) (in the sense of Theorem \ref{viscolim}).
On the other hand, in the general case, we are unable to prove that,
given some parameters  $\gamma_{ij}$ satisfying (\ref{coga}),(\ref{coga2}),
it is possible to pick out some corresponding
coefficients $K_{ij}$ satisfying (\ref{17}),(\ref{anz})
(or, equivalently, $\alpha_{ij}$ satisfying   (\ref{aij}))
and to build a sequence of  linear parabolic problems as (\ref{para2}) approximating   the 
 problem (\ref{ipe})-(\ref{tci}). The main difficulty is in inverting some complicate matrices and
 this involves heavy computations. 
However, in  this section we prove such result 
for some particular and quite general instances.

\begin{subsection} {}

First we show that, when the transmission conditions (\ref{tci}) have the particular form
\be\label{nnp1}  \lambda_{i} u_i(0,t)= \gamma_{i}\sum_{j\in\mathcal I} \lambda_{j} u_j(L_j,t) 
\  \ \ \forall i\in \mathcal O ,\qquad  \gamma_{i}>0,\ \sum_{i\in\mathcal O} \gamma_i=1 \ ,\ee
it is possible to find families  $\{K_{ij}\}$ satisfying (\ref{17}), (\ref{anz})
in such a way  the limit $u$ of the sequence of solutions of problems (\ref{para2}) satisfies conditions (\ref{nnp1}).

 Let  $\mathcal I=\{1,2,...,m_\mathcal I\}$, $\mathcal O=\{m_\mathcal I+1,m_\mathcal I+2,...,m\}$;
we consider the following  parabolic transmission conditions , which are  particular cases of the ones in (\ref{para}),  
$$
   -\li \uei(N, t) +\ep {\uei}_x(N, t)
	= \dsp 
	\sum_{j\in \mathcal O} k_{j} (\ue_j(N,t)-\uei(N,t))  \ \ i\in\mathcal I\ ,
$$
$$
   \li \uei(N, t) -\ep {\uei}_x(N, t)
	= \dsp  k_{i} 
	\sum_{j\in\mathcal I}(\ue_j(N,t)-\uei(N,t))  \ \ i\in\mathcal O\ ,
$$
where $k_j>0$ ($j\in \mathcal O$);
this kind of   transmission conditions  involves  only the jumps between the solutions on  each ougoing arc and the  solutions  on each incoming one.

The corresponding coefficient matrix $Q$ of the linear system (\ref{tcf}) is  

$$
\quad  Q=\begin{pmatrix} 
 & & & &   -k_{m_\mathcal I+1} & -k_{m_\mathcal I+2} & ...&-k_{m} \\
	 & &
		Q_{11}
	&  &.&.&.&.&
	\\
	 & & &  &.&.&.&.\\
	 & & &  &     -k_{m_\mathcal I+1} & -k_{m_\mathcal I+2} & ... &-k_{m}
	\\
	-k_{m_\mathcal I+1} & .. &..&-k_{m_\mathcal I+1} & &   &  &
	\\	
	.&.&.&. & & Q_{22} & & 
	\\
	.&.&.&.&  &
	   &  &
	\\
	 -k_{m} & ..& ..&  -k_{m}  & & &  
	 \end{pmatrix}  ,
$$
where $Q_{11}, Q_{22}$ are diagonal matrices, 
$$\ Q_{11}=\left(\dsp \sum_ {j\in \mathcal O} k_{j} \right) I\ \ , 
\ \ Q_{22}=diag\{
	 m_{\mathcal I} k_{i} +\lambda_{i},\  i\in\mathcal O\}  .$$

We consider the second line in (\ref{tcf}); in this case it gives
\be
\label{uuu} \left( m_{\mathcal I} k_{i} +\li \right) u_i(0,t) = k_i\sum_{j\in \mathcal I} \mathcal W^N_j(t)\ ,\ \ \ i\in\mathcal O\ ;
\ee
then, summing the first $m_\mathcal I$ equations  of system (\ref{tcf})
we obtain
\be\label{sh}\sum_{j\in\mathcal O} k_j \sum_{i\in\mathcal I} \mathcal W^N_i(t) + m_\mathcal I\sum_{j\in\mathcal O}-k_j u_j(0,t)= \sum_{i\in\mathcal I} \li u_i(L_i,t)\ ;
\ee
using (\ref{uuu}) in (\ref{sh}) gives 
$$
\sum_{j\in\mathcal O} \frac {  \lambda_j k_j}{m_\mathcal I k_j+\lambda_j} \sum_{i\in\mathcal I} \mathcal W^N_i(t)
= \sum_{i\in\mathcal I} \li u_i(L_i,t)\ ,$$
then, by (\ref{uuu}) we obtain
$$
	\li u_i(0,t)= \frac{\li k_i}{m_\mathcal Ik_i+\li} \left(\sum_{j\in\mathcal O}\frac  {\lambda_j k_j}{m_{\mathcal I} k_{j} +\lambda_j}	\right)^{-1} \sum_{l\in\mathcal I} \lambda_l  u_l(L_l,t)\ ,\ \ \ i\in\mathcal O\ .
$$
It is easy 
to show  that, for every set   
$\{\gamma_i : i\in\mathcal O\ ,\ 
0<\gamma_i<1\ ,\ \dsp  \sum_{i\in\mathcal O}\gamma_i=1\}\  ,$
 there exist   sets
 $\{k_i: i\in\mathcal O\ ,\ k_i>0\}\ $
 satisfying 
\be\label{gammaalpha1}
 \dsp\gamma_i=\frac {\lambda_i k_i}{\lambda_i+m_\mathcal Ik_i}\left(\sum_{j\in\mathcal O}\frac  {\lambda_j k_j}{m_{\mathcal I} k_{j} +\lambda_j}\right)^{-1}\ ,\ \ \ i\in\mathcal O\ .\ee
For this, it is sufficient  to fix $\theta >0$ such that $m_{\mathcal I} \theta \gamma_i< \li$ for all $i\in\mathcal O$, and  choose each $k_i$ in such a way 
$$\frac {\lambda_ik_i} {\lambda_i+m_\mathcal I k_i}= \theta \gamma_i\ .$$

\bigskip 

  Notice that, when $\mathcal I=\{1,...,m-1\}\ ,$  $\ \mathcal O=\{m\}$, 
there is only the following  way to set  transmission conditions conserving the flux at the node, 
\be\label{n1}  \lambda_{m}u_m(0,t)=\sum_{i\in\mathcal I}  \li u_i(L_i,t)\ ;\ee
  for this reason, Theorem \ref{viscolim}
 ensures that, for this kind of  networks, for all the families  $\{K_{ij}\}$ satisfying (\ref{17}),  (\ref{anz})
 the sequences of solutions to problems (\ref{para2}) converge the solution to problem
 (\ref{ipe})-(\ref{bci2}), (\ref{n1}).
 
 \end{subsection}
\bigskip

 \begin{subsection}{}

 Another interesting case we can deal with, it is for  networks with  only two outgoing arcs, i.e.:
$$\mathcal I=\{1,2,...,m_I\}\ ,\ \mathcal O=\{h_1, h_2\}\ ,$$
with transmission conditions 
\be\label{nn} 
\lambda_{i} u_i(0,t)=\dsp\sum_{j\in\mathcal I} \gamma_{ij}\lambda_{j} u_j(L_j,t) 
\  \ \t{ for } \ i=h_1,h_2\
 \ ,\ee
where 
$ 
\gamma_{h_1j}\in (0,1)
$ and, obviously,   $
 \gamma_{h_2j}=1-  \gamma_{h_1j}$,  for  $j\in \mathcal I 
$.
Fixed a pair of  families $\{\gamma_{h_1j}\}_{j\in\mathcal I}$ and $\{\gamma_{h_2j}\}_{j\in\mathcal I}$
satisfying these  conditions, 
we are going to find families $\{K_{ij}\}$ achieving our purpose.
We impose the following conditions on some of coeffcients $K_{ij}$ involved in the transmission conditions
 in (\ref{para2}), 
$$K_{ij}=0 \ \  \t{ if }\ \ i\neq j ,\    i,j\in \mathcal I \t { or } i,j\in \mathcal O, \  ,\ \ \ \   K_{ih_2}=k >0\ \  \forall \   i\in \mathcal I ;$$ 
the corresponding 
coefficient matrix of the linear system (\ref{tcf}) is 
$$
Q=\begin{pmatrix}  K_{1h_1} +k& 0& .&.&0& -K_{1h_1} & -k\\
0&K_{2h_1} +k & .&.&0&  -K_{2h_1} & -k\\
	.&.&.&.&.&.&.\\
	.&.&.&.&.&.&.\\
	0&\dsp 0&.&. &   \dsp  K_{m_\mathcal Ih_1}+ k &  -K_{m_\mathcal Ih_1} &  -k
	\\ \\
	-K_{1h_1} &\dsp -K_{2h_1} &   
	 .&. &-K_{m_\mathcal Ih_1}&  \dsp \sum_{i\in\mathcal I}
	 K_{ih_1}  +\lambda_{h_1}   &  0 
	\\
	-k&\dsp -k & . &.& -k&0&  m_\mathcal Ik+\lambda_{h_2} 
	 \end{pmatrix}  .
$$

In the following we use the notations
$$u_i:=u_i(L_i,t) ,\  \mathcal W^N_i:= \mathcal W^N_i(t)
 \t{ for } i\in\mathcal I\ , \ \ \ u_{h_1}:=u_{h_1}(0,t), u_{h_2}:=u_{h_2}(0,t)\ 
 \ .$$
The first $m_\mathcal I$ equations of system  (\ref{tcf}) in this case give
\be
 (K_{ih_1}+k) \w_i = K_{ih_1} u_{h_1}+ku_{h_2} +\lambda_i u_i\ \ i\in\mathcal I \ ;
 \ee
  using the above relations in the penultimate  equation in system (\ref{tcf})  we obtain
 
 $$
 \left( \dsp  \sum_{i\in\mathcal I} \left(\frac{-K_{ih_1}^2}{ k+K_{ih_1}} + K_{ih_1} \right)
\frac 1 {\lambda_{h_1}}
+1\right) \lambda_{h_1}u_{h_1}
\qquad\qquad \qquad\qquad$$ $$\qquad \qquad
+\dsp  \sum_{i\in\mathcal I} \left(\frac{-K_{ih_1}}{ k+K_{ih_1}}\right) 
\frac  k{\lambda_{h_2}} \lambda_{h_2}u_{h_2} 
=\sum_{i\in\mathcal I} \frac{K_{ih_1}}{ k+K_{ih_1}}\lambda_iu_i\ ,
$$
 then, using the conservation of the flux (\ref{consflux}),  we have
$$
 \left(\dsp  \left(  \frac k {\lambda_{h_1}}+ \frac k {\lambda_{h_2}}\right) \sum_{i\in\mathcal I}\frac{K_{ih_1}}{ k+K_{ih_1}} 
  +1
  \right)
 \lambda_{h_1}u_{h_1} \qquad\qquad\qquad\qquad$$
$$\qquad\qquad\qquad \dsp = \sum_{i\in\mathcal I}    
\left(\frac  k{\lambda_{h_2}}
\sum_{j\in\mathcal I} \left(\frac{K_{jh_1}}{ k+K_{jh_1}}\right) 
+
 \frac{K_{ih_1}}{ k+K_{ih_1}}\right)
\lambda_i u_i 
$$ 
and the  transmission conditions (\ref{nn}) are verified if, for $i\in\mathcal I$,
$$
\gamma_{h_1 i}  \left(\dsp  \left(  \frac k {\lambda_{h_1}}+ \frac k {\lambda_{h_2}}\right) \sum_{i\in\mathcal I}\frac{K_{ih_1}}{ k+K_{ih_1}} 
  +1
  \right)-
  \left(\frac  k{\lambda_{h_2}}
\sum_{j\in\mathcal I} \left(\frac{K_{jh_1}}{ k+K_{jh_1}}\right) 
+
 \frac{K_{ih_1}}{ k+K_{ih_1}}\right)
=0 .
$$ 
 
 We set 
$$\theta(y)=\frac {y}{k+y}\ ,\ \ \ 
\theta_i=\theta(K_{ih_1})\ , \ \ \ i\in\mathcal I
\ ;$$ 
so,  for any set  $\{\gamma_{h_1i}\}_{i\in\mathcal I}$, we are   looking  for coefficients 
$K_{ih_1}>0$ and $k>0$ verifying 
\be\label{frigo2}
k \left(-\gamma_{h_1i} (\lambda_{h_1}+\lambda_{h_2})
+\lambda_{h_1} \right) 
\dsp \sum_{j\in\mathcal I}
\theta_j 
+ \theta_i
 \lambda_{h_1}\lambda_{h_2}
=  \gamma_{h_1i}\lambda_{h_1}\lambda_{h_2}  \ , \ \ i\in\mathcal I\ .
\ee

We consider the linear system for the unknowns $X_i$
\be\label{legale2}
 k \left(-\gamma_{h_1i} (\lambda_{h_1}+\lambda_{h_2})
+\lambda_{h_1} \right)
 \dsp \sum_{j\in\mathcal I}
X_j   + X_i
 \lambda_{h_1}\lambda_{h_2}
=  \gamma_{h_1i}\lambda_{h_1}\lambda_{h_2}  \ , \ \ i\in\mathcal I\ ;
\ee
for small $k$ the system has dominant diagonal, so it has  a unique solution $\{X_i\}_{i\in\mathcal I}$. 
Notice that for $y> 0$ the function $ \theta$ increcreases  and $ \theta(y)\in(0,1)$,
then  we are going to show that $0<X_i<1$ for all $i\in\mathcal I$.

 We sum the equations in (\ref{legale2}) 
$$ 
 \dsp \sum_{j\in\mathcal I}
X_j  \left(  \dsp  k \sum_{i\in\mathcal I} (-\gamma_{h_1i} (\lambda_{h_1}+\lambda_{h_2})
+\lambda_{h_1}) + \lambda_{h_1}\lambda_{h_2} \right)=  
\lambda_{h_1}\lambda_{h_2}
\sum_{i\in\mathcal I} \gamma_{h_1i}\ 
$$

and we use the above equality in (\ref{legale2})
$$
X_i
 =  \gamma_{h_1i} - k \  \frac{  \left(-\gamma_{h_1i} (\lambda_{h_1}+\lambda_{h_2})
+\lambda_{h_1} \right)  
\dsp\sum_{i\in\mathcal I} \gamma_{h_1i}}
{  \dsp  \lambda_{h_1}\lambda_{h_2}+ k \sum_{j\in\mathcal I} (-\gamma_{h_1j} (\lambda_{h_1}+
\lambda_{h_4})
+\lambda_{h_1}) }   
  \ , \ \ i\in\mathcal I\ ;
$$
now, since each $0<\gamma_{h_1i}<1$,  it is possible to choose $k$ so small to have 
$0<X_i<1$ for all $i\in\mathcal I$.

 It follows that, to each set  $\{\gamma_{h_1i}\}_{i\in\mathcal I}$, $ \gamma_{h_1i}\in(0,1)$,
  corresponds a small value  $k_0$, such that, for each 
 $0<k<k_0$ there exist $K_{ih_1}$, $i\in\mathcal I$,  verifying (\ref{frigo2}).
  \end{subsection}
 
\end{section}

\begin{section}{Appendix}

 { \it Proof of Lemma \ref{appr}. }
Let $v\in BV(\A)$. We consider a sequence $\{ w_n\}_{n\in\N}$ such that 
$ {w_n}_i \in C^1(\ov I_i) \cap W^{2,1}(I_i)$ and 
\be \label{w} \begin{array}{ll}
  \dsp   \Vert  {w_n} -v \Vert_{L^1(\mathcal A)} \to_{n\to \infty} 0\ ,\\  \Vert { {w'_n}_i} \Vert_{L^1(I_i)}\leq TV_0^{L_i} (v_i)\ \  \forall i\in\M\ , \ \qquad
     \ep_n \Vert w_n\Vert_{W^{2,1}(\A)} \leq  C_1\ ,\end{array}
\ee
 where $C_1$ is a quantity independent from $n$ and , for $f\in BV(\mathcal A) $, 
$$TV_0^{L_i}(f_i)=\sup \left\{\int_{I_i} f_i \phi' dx : \phi\in C^1_0(I_i), |\phi|\leq 1\right\} $$
\cite{EG}.
Then we introduce the polinomials $p_{n}$ 
defined on  the network,
\be\label{poli} p_{n_i}(x) = a^i_{n} x^3 +b^i_{n} x^2 + c^i_{n}x +d_n^i \qquad x\in I_i\ ,\ee
whose coefficients have to be  determined.
When $i\in \mathcal O$,  we impose the  conditions 
\be
       \label{pn}\left\{ \begin{array}{ll}
              {p_n}_i(0)= {w_n}_i (0)\ , \\ 
        \ep_n {p'_n}_i(0) =  \dsp  \sum_{j\in\M} \alpha_{ij} w_{n_j}(N) + \li  {w_n}_i(0)  
             \\ 
         {p_n}_i(\delta_n) = {w_n}_i (\delta_n)\ , \\ 
         {p'_n}_i(\delta_n)={w'_n}_i (\delta_n)\ 
         \end{array}\right.
  \ee
 where 
 \be
     \label{tetan}
     \delta_n= 
         \ep_n^{\theta}\ ,
         \ \qquad \theta>1 \ ,
    \ee  
    and we define the sequence  $\{v_{n}\}_{n\in\N}$ on the outgoing arcs 
 $$
     {v_n}_i(x)=\left\{ \begin{array} {ll}{w_n}_i (x)\qquad x\in [\delta_n, L_i] \\ 
      {p_n}_i(x) \qquad x\in [0, \delta_n]\end{array} \qquad i\in \mathcal O\ . \right.
 $$
 When $ i\in  \mathcal I $ we define
 $$
      {v_n}_i(x)=\left\{ \begin{array} {ll}{w_n}_i (x)\qquad x\in [\ep_n, L_i -\delta_n] \\ 
      {p_n}_i(x) \qquad x\in [L_i-\delta_n,L_i] \\ 
      {r_n}_i(x) \qquad x\in [0,\ep_n] 
     \end{array} \qquad i\in \mathcal I\ , \right.
 $$
where ${p_n}_i(x) $ are the polinomials in (\ref{poli}) whose coefficients are determined by 
 \be
      \label{pn2}\left\{ \begin{array}{ll}
              {p_n}_i(L_i)= {w_n}_i(L_i)\ , \\ 
       \ep _n{p'_n}_i(L_i) =  \dsp - \sum_{j\in\M} \alpha_{ij} {w_n}_j(N) + \li  {w_n}_i(L_i) \\ 
      {p_n}_i(L_i-\delta_n) = {w_n}_i (L_i-\delta_n)\ , \\ 
      {p'_n}_i(L_i-\delta_n)={w'_n}_i(L_i-\delta_n)\  ,
 \end{array}\right.\ee
 and ${r_n}_i(x)=\mu^i_n x^2+\nu^i_n x+\rho_n^i$ , where the coefficients are determined by the 
 following conditions 
 \be
       \label{rn}\left\{ \begin{array}{ll}
               {r_n}_i(0)= B_i\ , \\ 
                {r_n}_i(\ep_n) = {w_n}_i (\ep_n)\ , \\ 
         {r'_n}_i(\ep_n)={w'_n}_i (\ep_n)\  .
         \end{array}\right.
  \ee

 For all $n\in \N$ , $v_n\in D_{\ep_n}$, since ${v_n}_i\in \mathcal C^1(I_i)$ for all $i\in\M$ and  $v''_n\in L^1(\A)$, the transmission conditions at the internal node are verified, thanks to the first two equalities in (\ref{pn}) and (\ref{pn2}) and also the boundary ones, thanks to
  the first condition in (\ref{rn}).

 If $i\in\mathcal O$, the conditions in (\ref{pn}) imply
\be
          \dsp  d_n^i= w_{n_i}(0) 
         \ ,\qquad
        \ep_n c_n^i = \dsp  \sum_{j\in\M} \alpha_{ij} w_{n_j}(N) + \li  w_{ni}(0) 
       \ee
so that 
\be
    \label{cd} |d_n^i|\ ,  \ep_n |c_n^i| \leq C_0\ ,\qquad n\in\N \ ,
\ee 
where $C_0$ is a quantity  independent from $n$, thanks to (\ref{w}).
The conditions in (\ref{pn}) also imply  
  \be
       \label{b}
       \vert b_n^i\vert  \delta_n^{2}= \vert -2c_n^i\delta_n + 3 ({w_n}_i(\delta_n)-{w_n}_i(0)) - \delta_n {w'_n}_i(\delta_n)\vert 
         \ee
  \be
       \label{a} 
        \vert a_n^i  \vert \delta_n^{3}  =  \vert  {c_n^i} \delta_n- 2  ({w_n}_i(\delta_n)-{w_n}_i(0)) +\delta_n {w'_n}_i(\delta_n) \vert \ ;
            \ee
    notice that the quantities in (\ref{b}) and (\ref{a}) go to zero when $n$ goes to infinity,
        since
    ${w_n}_i$ is continuous and $\delta_n {w'_n}_i(\delta_n)$ is infinitesimal thanks (\ref{tetan}) and (\ref{w}).
        
    Now we have 
  $$
      \Vert p_{n_i}\Vert_{L^{1}(0,\delta_n)} \leq
      \frac{ | a_n^i|  \delta_n^{4}}{4} + \frac{|b_n^i| \delta_n^{3}} 3 +\frac{ |c_n^i| \delta_n^{2}}{2} +|d_n^i| \delta_n
       $$
  $$
      \Vert p'_{n_i}\Vert_{L^{1}(0,\delta_n)} \leq | a_n^i|  \delta_n^{3}+ |b_n^i| \delta_n^{2} +\vert c_n^i\vert  \delta_n \ .
     $$
Similar computations can be made when $i\in\mathcal I$,
 so that,
 thanks to 
  (\ref{cd}),(\ref{b}), (\ref{a}), 
\be
	\label{p1}      \sum_{i\in\mathcal O}  \Vert p_{n_i} \Vert_{W^{1,1}(0,\delta_n)}  +
          \sum_{i\in\mathcal I}  \Vert p_{n_i} \Vert_{W^{1,1}(L_i-\delta_n,L_i)} 
                \dsp  \rightarrow_{n\to +\infty} 0\ .
 \ee
As regard to $r_n$, using conditions in (\ref{rn})
 we see that
$$
	\rho_n^i=B_i\ ,
$$
$$
	\mu_n^i \ep_n^2 + \nu_n^i \ep_n + B_i={w_n}_i (\ep_n)\ ,
$$
$$
	2\mu_n^i \ep_n +\nu_n^i={w'_n}_i (\ep_n)\ ,
$$
so we have, thanks to (\ref{w}), 
$$
	|\nu_n^i \ep_n |= |-2 B_i +2{w_n}_i (\ep_n)-{w'_n}_i (\ep_n)\ep_n |\leq C_\nu\ ,
	\quad  C_\nu \t{ independent from } n\ ,
$$
$$
	|\mu_n^i \ep_n ^2|= |B_i+ {w'_n}_i (\ep_n)\ep_n-{w_n}_i (\ep_n)|
	\leq C_\mu\ ,
	\quad  C_\mu \t{ independent from } n\ ,
$$
which imply
\be
	\label{ri}
	\sum_{i\in\mathcal I}\Vert r_{n_i} \Vert_{L^{1}(0,\ep_n)} \leq 
	 | \mu_n^i |\frac  {\ep_n^3} 3  +|\nu_n^i| \frac{\ep_n^2}2  +|\rho_n^i| \ep_n
                  \to_{n\to +\infty} 0\ ,
\ee
and
\be
	\label{r'1}
	\sum_{i\in\mathcal I}\Vert r'_{n_i} \Vert_{L^{1}(0,\ep_n)} \leq 
	 | \mu_n^i | \ep_n^2 +|\nu_n^i| \ep_n 
          \leq C_r\ ,
	\quad  C_r \t{ independent from } n\ .
\ee

Thanks to(\ref{w}),(\ref{p1}),(\ref{ri}) we obtain 
 \be
        \label{L1}   \Vert v_{n} -v\Vert_{L^{1}(\A)}  \dsp \rightarrow_{n\rightarrow +\infty} 0\ .
         \ee
 
 Now we are going to prove that, for $i\in\M$,
 \be
 	\label{bv}
 	 \Vert{{v'_n}_i}\Vert_{L^1(I_i)} \leq TV_0^{L_i} (v_i) 
	+  C_{BV}, \  \quad
 C_{BV} \t{  independent from }n\ :
 \ee
 for $i\in\mathcal I$ 
$$
	\int_{I_i} |{{v'_n}_i }| dx \leq 
	\Vert  {p'_n}_i \Vert_{L^{1}(L_I-\delta_n,L_i)} 
	+  \Vert  {r'_n}_i \Vert_{L^{1}(0,\ep_n)} +\Vert {{w'_n}_i}  \Vert_{L^{1}(I_i)}\ ,
$$
and similar computations can be made for the outgoing arcs, 	so that, 
using (\ref{w}) and 
(\ref{p1})-(\ref{r'1})
 we obtain (\ref{bv}), which implies  that 
 $$\Vert {v_n}\Vert_{W^{1,1}(\A)} \leq C_2\ ,\qquad    \t{ for all }n\in\N\ ,$$
  where $C_2$  depends on
 $\Vert {v}\Vert_{L^1(\A)}$ and $TV_0^{L_i}({v}_i)$, $i\in\M$.

We have only to prove that $\ep_n \Vert v_n \Vert _{W^{2,1}(\A)} $ is bounded independently of $ n$.
  For $i\in\mathcal O$ ,
$$
   \ep_n \Vert {p''_n}_i \Vert_{L^1(0,\delta_n)} \leq \ep_n (3 |a_n^i|  \delta_n^2 +2 |b_n^i| \delta_n);
$$
from   (\ref{b}), (\ref{a}),  (\ref{cd}) and  (\ref{w}) we know that
$$
       \ep_n |b_n^i| \delta_n =\ep_n |-2c_n^i + \frac 3 {\delta_n} ({w_n}_i(\delta_n)-{w_n}_i(0)) - {w'_n}_i(\delta_n)|
$$
$$
     \leq  2  C_0+ \ep_n 4 C_S \Vert  {w_n}_i \Vert _{W^{2,1}(I_i)} 
      \leq  2C_0 +4C_SC_1
$$
    and 
$$
       \ep_n  | a_n^i|  \delta_n^{2} = \ep_n \left\vert   {c_n^i} - 
       2\frac{{w_n}_i(\delta_n)-{w_n}_i(0)}{\delta_n}+ {w'_n}_i(\delta_n)\right\vert\ 
$$
$$
        \leq C_0+ 3 \ep_n C_S \Vert  {w_n}_i \Vert _{W^{2,1}(I_i)} \leq  C_0 +3 C_SC_1\ ,
$$
where $C_S$ depends on Sobolev constants. Similar estimates can be obtained for  $i\in\mathcal I$, 
to conclude that there exists $C_3>0$ such that 
$$
   	\ep_n \Vert  {p_n}_i\Vert_{W^{2,1}(0,\delta_n)}\  ,\   \ep_n \Vert  {p_n}_j\Vert_{W^{2,1}(L_j-\ep_n, L_j)}\leq  C_3\ ,\qquad \  \forall n\in\N\ , 	i\in\mathcal O\ ,
  	 \ j\in\mathcal I,
$$
(see also (\ref{p1}); moreover, for $i\in\mathcal I$, 
we  know  that 
$$
	\ep_n \Vert {r_n}_i''\Vert_{L^1(0,\ep_n)}\leq 2\ep^2_n| \mu_n^i | \leq
	2 C_\mu\ .
$$
Since 
  $\dsp \ep_n \Vert  w_n\Vert_{W^{2,1}(\A)} \leq  C_1$, for all  $n\in\N$, 
  we conclude that 
  $\dsp \ep_n \Vert v_n\Vert_{W^{2,1}(\A)} \leq C$ , for all $n\in\N\ $, where $C$ depends on 
  $C_0, C_1, C_2, C_3, C_\mu$.

\end{section}

\end{document}